\documentclass[11pt,a4paper,reqno]{amsart}

\usepackage{amssymb,amsmath}
\usepackage{amsthm}
\usepackage{latexsym}
\usepackage[colorlinks=true,linktocpage=true,pagebackref=false, citecolor=black,linkcolor=black]{hyperref}
\usepackage{graphics}
\usepackage{euscript}
\usepackage{mathrsfs}
\usepackage[dvips]{curves}
\usepackage{tikz}
\usetikzlibrary{arrows,decorations.pathmorphing,backgrounds,positioning,fit,matrix}
\usepackage{comment}
\usepackage{multirow}
\usepackage{xcolor}
\usepackage[all]{xy}
\usepackage{caption,subcaption}


\voffset=-.5cm
\hoffset=-1.5cm
\setlength{\textwidth}{16cm}
\setlength{\textheight}{23.4cm}
\raggedbottom

\parskip=1.2ex



\newtheorem{thm}{Theorem}[section]

\newtheorem{prop}[thm]{Proposition}

\newtheorem{lem}[thm]{Lemma}

\newtheorem{prob}[thm]{Problem}

\theoremstyle{definition}

\newtheorem{define}[thm]{Definition}

\theoremstyle{remark}
\newtheorem{remark}[thm]{Remark}

\newtheorem{example}[thm]{Example}
\newtheorem{examples}[thm]{Examples}
\newtheorem{quest}[thm]{Question}

\numberwithin{equation}{section}
\renewcommand\thesubsection{\thesection.\Alph{subsection}}

\setcounter{secnumdepth}{4}


 \newcommand{\J}{{\mathcal J}}


 \newcommand{\R}{{\mathbb R}}
 \newcommand{\C}{{\mathbb C}}

\newcommand{\sph}{{\mathbb S}} 
 \newcommand{\PP}{{\mathbb P}}


\newcommand{\pol}{{\EuScript K}}

\newcommand{\Pp}{{\EuScript P}}
\newcommand{\Ss}{{\EuScript S}}

\newcommand{\Ll}{{\EuScript L}}

\newcommand{\Qq}{{\EuScript Q}}

\newcommand\Rr{{\EuScript R}}

\newcommand{\Tt}{{\EuScript T}}
\newcommand{\Hh}{{\EuScript H}}

\newcommand{\Cc}{{\EuScript C}}

\newcommand{\tildebaja}{{\raise.17ex\hbox{$\scriptstyle\sim$}}}


\newcommand{\Int}{\operatorname{Int}}
\newcommand{\im}{\operatorname{Im}}

\newcommand{\id}{\operatorname{id}}

\newcommand{\zar}{\operatorname{zar}}

\newcommand{\Reg}{\operatorname{Reg}}

\newcommand{\mult}{\operatorname{mult}}


\newcommand{\x}{{\tt x}} \newcommand{\y}{{\tt y}}
 \renewcommand{\t}{{\tt t}}
\newcommand{\s}{{\tt s}}


\newcommand{\gta}{{\mathfrak a}}


\newcommand{\ol}{\overline}

\newcommand{\veps}{\varepsilon}

\setcounter{tocdepth}{1}

\begin{document}

\title[On regulous and regular images of Euclidean spaces]{On regulous and regular images of Euclidean spaces}

\author{Jos\'e F. Fernando}
\address{Departamento de \'Algebra, Facultad de Ciencias Matem\'aticas, Universidad Complutense de Madrid, 28040 MADRID (SPAIN)}
\email{josefer@mat.ucm.es}

\author{Goulwen Fichou}
\author{Ronan Quarez}
\address{Institut de Recherche Math\'ematiques de Rennes IRMAR (UMR 6625), Universit\'e de Rennes 1, Campus de Beaulieu, 35042 RENNES cedex (FRANCE)}
\email{goulwen.fichou@univ-rennes1.fr, ronan.quarez@univ-rennes1.fr}

\author{Carlos Ueno}
\address{Matematika Munkak\"oz\"oss\'eg, P\'ecsi Kod\'aly Zolt\'an Gimn\'azium, Dob\'o Istv\'an utca 35-37, 7629 P\'ECS (HUNGARY)}
\email{cuenjac@gmail.com}

\date{15/10/2017}
\subjclass[2010]{14P10, 14E05; Secondary: 26C15, 37F10}
\keywords{Semialgebraic set, regular map, regulous map, regular image, regulous image, open quadrant.}

\thanks{First author supported by Spanish GRAYAS MTM2014-55565-P and Grupos UCM 910444. This article has been mainly written during several research stays of the first author in the Institut de Recherche Math\'ematiques de Rennes of the Universit\'e de Rennes 1. The first author would like to thank the department for the invitation and the very pleasant working conditions.}

\begin{abstract}--- \ 
In this work we compare the semialgebraic subsets that are images of regulous maps with those that are images of regular maps. Recall that a map $f:\R^n\to\R^m$ is \em regulous \em if it is a rational map that admits a continuous extension to $\R^n$. In case the set of (real) poles of $f$ is empty we say that it is \em regular map\em. We prove that if $\Ss\subset\R^m$ is the image of a regulous map $f:\R^n\to\R^m$, there exists a dense semialgebraic subset $\Tt\subset\Ss$ and a regular map $g:\R^n\to\R^m$ such that $g(\R^n)=\Tt$. In case $\dim(\Ss)=n$, we may assume that the difference $\Ss\setminus\Tt$ has codimension $\geq 2$ in $\Ss$. If we restrict our scope to regulous maps from the plane the result is neat: \em if $f:\R^2\to\R^m$ is a regulous map, there exists a regular map $g:\R^2\to\R^m$ such that $\im(f)=\im(g)$\em. In addition, we provide in the Appendix a regulous and a regular map $f,g:\R^2\to\R^2$ whose common image is the open quadrant $\Qq:=\{\x>0,\y>0\}$. These maps are much simpler than the best known polynomial maps $\R^2\to\R^2$ that have the open quadrant as their image.
\end{abstract}
\maketitle

\section{Introduction}\label{s1}

A map $f:=(f_1,\ldots,f_m):\R^n\to\R^m$ is \em regulous \em if its components are \em regulous functions\em, that is, $f_i=\frac{g_i}{h_i}$ is a rational function that admits a extension continuous extension to $\R^n$ and $g_i,h_i\in\R[\x]$ are relatively prime polynomials. By \cite[3.5]{fhmm} $\{h_i=0\}\subset\{g_i=0\}$ and the codimension of $\{h_i=0\}$ is $\geq2$. Consequently, the set of indeterminacy of $f$ is an algebraic set of codimension $\geq2$. In case $\{h_i=0\}$ is empty for each $i=1,\ldots,m$ we say that $f$ is a \em regular map \em whereas $f$ is \em polynomial \em if we may choose $h_1=\cdots=h_m=1$. Modern Real Algebraic Geometry was born with Tarski's article \cite{ta}, where it is proved that the image of a semialgebraic set under a polynomial map is a semialgebraic set. A subset $\Ss\subset\R^n$ is \em semialgebraic \em when it has a description by a finite boolean combination of polynomial equalities and inequalities, which we will call a \em semialgebraic \em description.

We are interested in studying what might be called the `inverse problem' to Tarski's result, that is, to represent semialgebraic sets as images of nice semialgebraic sets (in particular, Euclidean spaces, spheres, smooth algebraic sets, etc.) under semialgebraic maps that enjoy a kind of identity principle, that is, semialgebraic maps that are determined by its restriction to a small neighborhood of a point (for instance, polynomial, regular, regulous, Nash, etc.). In the 1990 \em Oberwolfach reelle algebraische Geometrie \em week \cite{g} Gamboa proposed: 

\begin{prob}\label{prob0}
To characterize the (semialgebraic) subsets of $\R^m$ that are either polynomial or regular images of Euclidean spaces. 
\end{prob}

During the last decade, the first and fourth authors jointly with Gamboa have attempted to understand better polynomial and regular images of $\R^n$ with the following target: 
\begin{itemize}
\item To find obstructions to be either polynomial or regular images \cite{fg1,fg2,fgu2,fu3}. 
\item To prove (constructively) that large families of semialgebraic sets with piecewise linear boundary (convex polyhedra, their interiors, their complements and the interiors of their complements) are either polynomial or regular images of Euclidean spaces \cite{fgu1,fgu3,fu1,fu2,fu4,u1,u2}. 
\end{itemize}

In \cite{fe1} appears a complete solution to Problem \ref{prob0} for the $1$-dimensional case, but the rigidity of polynomial and regular maps makes really difficult to approach Problem \ref{prob0} in its full generality. Taking into account the flexibility of Nash maps, Gamboa and Shiota discussed in 1990 the possibility of approaching the following variant of Problem \ref{prob0} (see \cite{g}). 

\begin{prob}\label{prob1}
To characterize the (semialgebraic) subsets of $\R^n$ that are Nash images of Euclidean spaces. 
\end{prob}

A \em Nash function on an open semialgebraic set \em $U\subset M$ is a semialgebraic smooth function on $U$. In \cite{fe2} the first author solves Problem \ref{prob1} and provides a full characterization of the semialgebraic subsets of $\R^m$ that are images under a Nash map on some Euclidean space. A natural alternative approach is to consider regulous maps, which are closer than Nash maps to regular maps but seem to be less rigid than the latter \cite{kz2,kz}. We propose to devise the following variant of Problem \ref{prob0}. 

\begin{prob}\label{prob2}
To characterize the (semialgebraic) subsets of $\R^n$ that are regulous images of Euclidean spaces. 
\end{prob}

By \cite[3.11]{fhmm} a map $f:\R^n\to\R^m$ is regulous if and only if there exists a finite sequence of blowings-up with non-singular centers $\phi:Z\to\R^n$ such that the composition $f\circ\phi:Z\to\R^m$ is a regular map. As $Z$ is a non-singular algebraic set, we deduce that $f(\R^n)=(f\circ\phi)(Z)$ is a pure dimensional semialgebraic subset of $\R^n$. In addition, as a regulous map on $\R$ is regular \cite[3.6]{fhmm}, we deduce that $f(\R^n)$ is connected by regular images of $\R$. We refer the reader to \cite{fhmm} for a foundational work concerning the ring of regulous functions on an algebraic set. 

Our experimental approach to the problem of characterizing the regulous images of Euclidean spaces suggests that regular images and regulous images of Euclidean spaces coincide. Although we have not been able to prove this result in its full generality our main result in this direction is the following.

\begin{thm}\label{main}
Let $f:\R^n\to\R^m$ be a regulous map and let $\Ss:=f(\R^n)$ be its image. Then, there exists a regular map $g:\R^n\to\R^m$ whose image $\Tt:= g(\R^n)\subset\Ss$ is dense in $\Ss$. In addition, if $\dim(\Ss)=n$, we may assume that the difference $\Ss\setminus\Tt$ has codimension $\geq 2$.
\end{thm}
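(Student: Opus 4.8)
The plan is to start from the characterization recalled in the excerpt: by \cite[3.11]{fhmm} there is a finite sequence of blowings-up with non-singular centers $\phi\colon Z\to\R^n$ so that $F:=f\circ\phi\colon Z\to\R^m$ is a \emph{regular} map and $\Ss=f(\R^n)=F(Z)$. Thus the problem is reduced to a purely geometric one: given a non-singular real algebraic set $Z$ and a regular map $F\colon Z\to\R^m$, find a regular map $g\colon\R^n\to\R^m$ whose image is dense in $F(Z)$ and, when $\dim\Ss=n$, misses only a set of codimension $\ge 2$ in $\Ss$. The idea is to replace $Z$ by $\R^n$ at the cost of discarding a small (lower-dimensional, or at worst codimension $\ge 2$) piece of the image.

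First I would reduce $Z$ to an affine chart. Since $\phi$ is a composition of blowings-up with non-singular centers, $Z$ is a smooth algebraic set; pick an affine open chart $U\subset Z$ that is dense (so $F(U)$ is dense in $F(Z)=\Ss$ by continuity and irreducibility considerations on each irreducible component, using that $\Ss$ is pure-dimensional as noted in the excerpt). The chart $U$ is, up to a regular isomorphism, a Zariski-open subset of $\R^{n}$ — more precisely, after a standard trick one can realize $U$ as a \emph{closed} non-singular algebraic subset of some $\R^{N}$ which is regularly isomorphic to the graph of the inverses of the denominators appearing in the chart, hence $F|_U$ extends to a regular map on that closed set. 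Next I would use the well-known fact that a closed non-singular algebraic subset $V\subset\R^N$ of dimension $n$ that is, say, connected and "nice enough" admits a dominant regular (even polynomial) map from $\R^n$ whose image is dense in $V$; more robustly, one composes with a generic linear projection and a resolution/parametrization argument. The key technical input here is that one only needs density of the image, not surjectivity, so it suffices to parametrize a Zariski-dense open subset of $V$ by $\R^n$ via a birational regular map, and birational parametrizations of rational pieces of $V$ are available after shrinking.

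The main obstacle — and the reason the theorem is stated with "dense" rather than "equal" — is controlling the \emph{codimension} of what is lost. For the general statement, losing a lower-dimensional set is automatic from any dominant-on-each-component construction, so the real content is the refinement when $\dim\Ss=n$. Here $F\colon Z\to\R^m$ is generically finite onto its image, and I would argue that the set of points of $\Ss$ not hit by the regular map $g$ we construct is contained in the union of: (i) the image of the exceptional locus of $\phi$ (which has codimension $\ge 1$ in $\R^n$, hence its image has dimension $\le n-1$, but one must do better), and (ii) the indeterminacy/boundary contributions coming from passing from $U$ to $\R^n$. The trick to get codimension $\ge 2$ is to choose the affine chart and the parametrization so that the bad set is an algebraic subset of $\Ss$ of dimension $\le n-1$ \emph{and} then to absorb its codimension-$1$ part: since $\Ss$ has dimension $n$ and $g$ is regular with $n$-dimensional dense image, one can perturb or add a second regular map (using that a finite union of regular images of $\R^n$ of the same dimension is again a regular image of $\R^n$, by the standard "gluing" lemmas from \cite{fgu1,fgu3}) to cover the codimension-$1$ strata while still missing only codimension $\ge 2$. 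Making this absorption precise — in particular showing the leftover is genuinely algebraic of codimension $\ge 2$ and not merely semialgebraically small — is the step I expect to require the most care, and is presumably where the hypotheses and the earlier-cited structural results about regulous maps are used most heavily.
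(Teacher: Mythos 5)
Your proposal takes a genuinely different route from the paper's, and it contains a gap that the paper's argument avoids entirely. The paper never passes through the blowing-up $\phi\colon Z\to\R^n$; instead it uses the fact (already recorded in the Introduction, from \cite[3.5]{fhmm}) that the indeterminacy locus $X$ of a regulous map $f$ has codimension $\geq 2$ in $\R^n$, and then invokes Theorem \ref{complement} to produce a \emph{polynomial} map $h\colon\R^n\to\R^n$ with $h(\R^n)=\R^n\setminus X$. The composition $g:=f\circ h$ is then regular (because $f$ is regular off $X$), its image $\Tt=f(\R^n\setminus X)$ is dense in $\Ss$ by continuity, and $\Ss\setminus\Tt\subset f(X)$ has dimension $\leq\dim X\leq n-2$ by \cite[Thm.2.8.8]{bcr}. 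There is nothing to ``absorb'': the codimension-$\geq 2$ bound falls out for free because $X$ already has codimension $\geq 2$.

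Your route has two concrete problems. First, the step ``a closed non-singular algebraic subset $V\subset\R^N$ \ldots admits a dominant regular (even polynomial) map from $\R^n$ whose image is dense in $V$'' is not a general fact; it depends on $V$ being rational, and while the blown-up space $Z$ is indeed birational to $\R^n$, you would still have to turn a birational map (which has indeterminacy) into an everywhere-defined regular map on $\R^n$ with controlled image — which is essentially the problem the theorem is asking you to solve. Second, the ``absorption'' step relies on a purported gluing principle (``a finite union of regular images of $\R^n$ of the same dimension is again a regular image of $\R^n$''), for which I know of no such lemma in \cite{fgu1,fgu3} or elsewhere; those papers prove specific families of semialgebraic sets are regular or polynomial images, not a general union-closure statement, and in fact unions of regular images need not be regular images. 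The absorption step is also unnecessary once you notice the indeterminacy set is already small: the difficulty you are trying to fix was never there.

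The genuine content that your proposal does not supply — and that the paper isolates — is Theorem \ref{complement}: that $\R^n\setminus X$ is a polynomial image of $\R^n$ whenever $X$ is algebraic of codimension $\geq 2$. That is where the real work lives; once you have it, Theorem \ref{main} follows in a few lines by composition.
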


The proof of the previous result is reduced to show the following one, which has interest by its own and concerns the representation of the complement of an algebraic subset of $\R^n$ of codimension $\geq2$ as a polynomial image of $\R^n$.

\begin{thm}\label{complement}
Let $X\subset\R^n$ be an algebraic set of codimension $\geq2$. Then the constructible set $\R^n\setminus X$ is a polynomial image of $\R^n$.
\end{thm}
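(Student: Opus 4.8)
The plan is to proceed by induction on $n$ and, crucially, on the codimension $c := \codim(X) \geq 2$, reducing the general case to the model case $X = \{0\} \subset \R^2$ (or more generally a linear subspace of codimension $2$) by a sequence of projections. First I would record the base case: the complement $\R^2 \setminus \{0\}$ is a polynomial image of $\R^2$. This is essentially the classical fact underlying the open-quadrant constructions referenced in the abstract; concretely one can use a map like $(\x,\y) \mapsto \bigl((\x\y-1)\x, (\x\y-1)\y\bigr)$ or a similar explicit polynomial whose image is exactly $\R^2 \setminus \{0\}$, and this should be verified by a direct computation checking which points have preimages. More generally $\R^n \setminus L$ for $L$ an affine subspace of codimension $2$ is handled by taking a product of such a map with the identity on the complementary coordinates (after an affine change of coordinates).

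Next I would reduce an arbitrary algebraic set $X \subset \R^n$ of codimension $\geq 2$ to a hypersurface-type situation one dimension down. Pick a point $p \notin X$; after a linear change of coordinates, the projection $\pi : \R^n \to \R^{n-1}$ from $p$ (or a generic linear projection) restricts to a finite, or at least proper-with-finite-fibers, map on $X$, so $\pi(X)$ is an algebraic set in $\R^{n-1}$ of codimension $\geq 1$, i.e. contained in a hypersurface $\{h = 0\}$. The idea is: a point $x \in \R^n$ lies outside $X$ iff either $\pi(x) \notin \pi(X)$, or $\pi(x) \in \pi(X)$ but $x$ avoids the finitely many points of the fiber $\pi^{-1}(\pi(x)) \cap X$. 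One then builds the polynomial parametrization of $\R^n \setminus X$ in two pieces glued by the standard trick of multiplying by a polynomial that vanishes on the bad locus: parametrize $\R^{n-1} \setminus \{h = 0\}$ times the line $\R$ (giving the part over the complement of the hypersurface), and separately parametrize, fiberwise over $\{h=0\}$, the complement of the finite set $\pi^{-1}(\pi(x)) \cap X$ inside each line $\pi^{-1}(\pi(x))$. The finite-subset-of-a-line complement is again a polynomial image of $\R$ (indeed of $\R^2$), and the union of two polynomial images of $\R^n$ is a polynomial image of $\R^n$ by the elementary lemma that $\R^n$ maps polynomially onto $\R^n \sqcup \R^n$'s image under any two given maps — more precisely, if $A = \alpha(\R^n)$ and $B = \beta(\R^k)$ then $A \cup B$ is a polynomial image of $\R^{n}$ provided the dimensions match up, using a disjoint-union-via-separating-polynomial construction.

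The delicate point, and what I expect to be the main obstacle, is making the fiberwise description genuinely polynomial and uniform: the finitely many points of $\pi^{-1}(y) \cap X$ vary algebraically (in fact only semialgebraically, with possible jumps in cardinality) as $y$ ranges over $\pi(X)$, so a naive fiber-by-fiber construction does not patch into a single polynomial map. The way around this is to avoid parametrizing each fiber's finite complement directly and instead to realize $X$ as the zero set of a single polynomial on the hypersurface-times-line $\{h = 0\} \times \R \cong \R^{n-1}$, reducing to the codimension-$\geq 2$ case in dimension $n-1$ (one lower), which is exactly where the induction hypothesis applies — provided we arrange that $X$, viewed inside this $(n-1)$-dimensional algebraic set, still has codimension $\geq 2$ there, which holds because $\codim_{\R^n} X \geq 2$ forces $\dim X \leq n-2$. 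Handling the possibility that the ambient set $\{h=0\} \times \R$ is itself singular requires care: one should instead choose the projection so that $\pi(X)$ lies in a \emph{nonsingular} hypersurface, or resolve, or — cleanly — first embed the problem into the setting where we only ever need complements inside $\R^k$ of algebraic sets of codimension $\geq 2$, never inside singular varieties, by choosing coordinates so that the successive projections of $X$ remain contained in affine-linear subspaces. Pushing this bookkeeping through, together with the gluing lemma for unions and the base case $\R^2 \setminus \{0\}$, yields the theorem.
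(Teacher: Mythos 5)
The central gap is the gluing step. You invoke a lemma to the effect that if $A=\alpha(\R^n)$ and $B=\beta(\R^k)$ are polynomial images, then $A\cup B$ is again a polynomial image of a Euclidean space ``provided the dimensions match up,'' via a ``disjoint-union-via-separating-polynomial construction.'' No such lemma exists, and the statement is false: any polynomial image of $\R^N$ has \emph{irreducible} Zariski closure (if $\ol{f(\R^N)}^{\zar}=V_1\cup V_2$ with $V_i$ proper and closed, then $\R^N=f^{-1}(V_1)\cup f^{-1}(V_2)$ contradicts the irreducibility of $\R^N$). Take $A:=\{\x\geq 0,\ \y=0\}$ and $B:=\{\x=0,\ \y\geq 0\}$ in $\R^2$: both are polynomial images of $\R$ (via $t\mapsto(t^2,0)$, resp.\ $t\mapsto(0,t^2)$), of the same dimension, meeting at the origin, yet $A\cup B$ has reducible Zariski closure $\{\x\y=0\}$ and so is not a polynomial image of any Euclidean space. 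In your scheme the two proposed pieces live over $\R^{n-1}\setminus\{h=0\}$ and over $\{h=0\}$ respectively, hence have distinct Zariski closures, so there is no way to patch them by a generic union argument; the set $\R^n\setminus X$ itself has irreducible closure $\R^n$ and is not ruled out, but your route to it is. A minor additional slip: the map $(\x,\y)\mapsto((\x\y-1)\x,(\x\y-1)\y)$ you suggest for the base case sends $(0,0)$ to $(0,0)$, so its image is not $\R^2\setminus\{0\}$; the map actually cited by the paper from \cite{fg1} is $(\x,\y)\mapsto(\x\y-1,(\x\y-1)\x^2-\y)$.

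The paper never forms unions of images; it \emph{composes} polynomial self-maps of $\R^n$. It first establishes (Lemma \ref{recover}) the usable form of your projection idea: $X=\bigcap_{j=1}^{r}\pi_j^{-1}\bigl(\ol{\pi_j(X)}^{\zar}\bigr)$ for finitely many ($r\leq n+1$) well-chosen linear projections $\pi_j:\R^n\to\R^{n-1}$. Then (Lemma \ref{cod2}) a polynomial automorphism places $X$ in the interior of the wedge $\pol:=\{-\x_{n-1}\leq\x_n\leq\x_{n-1}\}$, and $\R^n\setminus\Int(\pol)$ is already known to be a polynomial image of $\R^n$ by \cite{fu4}. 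Each subsequent step applies a map of the shape $(x',x_n)\mapsto\bigl(x',x_n(1-H_j^2(x)\,G_j^2(x'))\bigr)$, conjugated by a linear isomorphism, which on every vertical line whose foot avoids $\ol{\pi_j(X)}^{\zar}$ stretches the two rays outside the wedge onto the whole line (a degree-five polynomial with negative leading coefficient fixing the two boundary points) and is the identity on the remaining lines. This produces a nested chain $\Ss_0\subset\Ss_1\subset\cdots\subset\Ss_r=\R^n\setminus X$ with $\Ss_j=f_j(\Ss_{j-1})$; since $\Ss_0$ is a polynomial image and compositions of polynomial maps are polynomial, so is $\Ss_r$. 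Every intermediate $\Ss_j$ has Zariski closure $\R^n$, which is precisely why the irreducibility obstruction never arises. Your instinct to use generic projections, to flag the non-uniformity of the fibers as the delicate point, and to anchor on a simple model case is sound; what is missing is the replacement of union by composition, together with the intersection formula of Lemma \ref{recover} and the wedge trick of Lemma \ref{cod2} that make the composition actually terminate on $\R^n\setminus X$.
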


Assume Theorem \ref{complement} is proved and let us see how the proof of Theorem \ref{main} arises.

\begin{proof}[Proof of Theorem \em \ref{main}]
As we have commented above, the set of indeterminacy $X$ of $f$ has codimension $\geq2$. By Theorem \ref{complement} there exists a polynomial map $h:\R^n\to\R^n$ such that $h(\R^n)=\R^n\setminus X$. Consider the composition $g:=f\circ h:\R^n\to\R^m$. We have 
$$
\Ss\setminus f(X)\subset\Tt:=g(\R^n)=f(\R^n\setminus X)\subset\Ss. 
$$
As $f$ is a continuous map and $\R^n\setminus X$ is dense in $\R^n$, we deduce that $\Tt$ is dense in $\Ss$. In addition, if $\dim(\Ss)=n$, we have $\Ss\setminus\Tt\subset f(X)$, so $\dim(\Ss\setminus\Tt)\leq\dim(f(X))$. By \cite[Thm.2.8.8]{bcr} the following inequalities
$$
\dim(f(X))\leq\dim(X)\leq n-2=\dim(\Ss)-2
$$
hold, so $\dim(\Ss\setminus\Tt)\leq\dim(\Ss)-2$, as required.
\end{proof}

As commented above we feel that the following question has a positive answer.
\begin{quest}\label{prob3}
Let $f:\R^n\to\R^m$ be a regulous map. Is there a regular map $g:\R^n\to\R^m$ such that $\im(f)=\im(g)$?
\end{quest}

As there are much more regulous maps than regular maps and they are less rigid that regular maps, we hope that a positive answer to the previous question will ease the solution to Problem \ref{prob0} in the regular case. If we restrict our target to regulous maps from the plane we devise the following net answer to Question \ref{prob3}. 
\begin{thm}\label{main2}
Let $f:\R^2\to\R^m$ be a regulous map. Then there exists a regular map $g:\R^2\to\R^m$ such that $\im(f)=\im(g)$.
\end{thm}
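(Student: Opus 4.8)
The plan is to follow the scheme of Theorem \ref{main} but to upgrade the density statement to an equality of images, exploiting the low dimension $n=2$. Let $X\subset\R^2$ be the set of indeterminacy of $f$. As recalled in the introduction, $X$ is an algebraic set of codimension $\geq 2$, hence $X$ is a finite set of points, say $X=\{p_1,\dots,p_r\}$. By Theorem \ref{complement} there is a polynomial map $h:\R^2\to\R^2$ with $h(\R^2)=\R^2\setminus X$, and setting $g_0:=f\circ h$ we get $g_0(\R^2)=f(\R^2\setminus X)=\im(f)\setminus\bigl(f(X)\setminus f(\R^2\setminus X)\bigr)$. So the only points of $\im(f)$ possibly missed by $g_0$ are among the finitely many values $f(p_1),\dots,f(p_r)$; call this finite set $F\subset\im(f)$. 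Since $f$ is continuous and $\R^2\setminus X$ is dense, each point of $F$ lies in the closure of $\im(g_0)$, but it need not be attained.

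The key point is then to absorb the finitely many missing points. First I would reduce to recovering a single point $q\in F\subset\cl(\im(g_0))$: once we know how to enlarge a regular map by one prescribed point of its closure without losing any values already attained, we iterate $r$ times. To recover $q$, I would pick a regular (indeed, polynomial) curve; concretely, since $q$ is in the closure of the connected pure $2$-dimensional (or lower-dimensional) set $\im(g_0)$, one can choose a point $a\in\R^2$ with $g_0(a)$ close to $q$ and then produce an auxiliary regular map whose image is a parametrized arc from $g_0(a)$ to $q$; more robustly, one uses that any regulous map on $\R$ is regular \cite[3.6]{fhmm}, so that the segment or arc joining a nearby attained value to $q$ inside $\Ss$ (which exists because $\Ss$ is connected by regular images of $\R$, again by the discussion after \cite[3.11]{fhmm}) is itself a regular image of $\R$. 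Then I would glue: given regular maps $g_0:\R^2\to\R^m$ and $\gamma:\R\to\R^m$ with $q\in\im(\gamma)$, form a regular map $G:\R^2\to\R^m$ with $\im(G)=\im(g_0)\cup\im(\gamma)$. The standard trick is to use a regular retraction-type construction: e.g. take $G(x,y):=g_0(x,y)$ for $(x,y)$ outside a region and $G(x,y):=\gamma(\text{something}(x,y))$ on a complementary region, smoothing the transition with a regular function; alternatively use that $\R^2\setminus\{0\}$ maps polynomially onto $\R^2$ (a consequence of Theorem \ref{complement}) to free up a coordinate line on which to run $\gamma$ while the rest of the plane still surjects onto $\im(g_0)$.

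The main obstacle I expect is precisely this gluing step: combining a $2$-dimensional regular image with a $1$-dimensional regular arc into a single regular (not merely regulous) image of $\R^2$, with full control that no previously attained value is lost and exactly the new point is added. The difficulty is that regular maps are rigid, so one cannot simply patch with bump functions; one must engineer an honest rational expression with empty pole set. I would handle it by working in $\R^2=\R\times\R$, using Theorem \ref{complement} to get a polynomial surjection $\R^2\to\R^2\setminus\{\text{line}\}$ or $\R^2\to(\R^2\setminus\{0\})$, so that on one piece we reproduce $g_0$ and on the removed locus (a lower-dimensional set still hit in the limit) we instead evaluate the arc $\gamma$; the continuity of $f$ and of the regular pieces guarantees the images fit together to give exactly $\im(g_0)\cup\{q\}$. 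Iterating over the finitely many points of $F$ yields a regular $g:\R^2\to\R^m$ with $\im(g)=\im(f)$, which is the assertion. Throughout, the fact that $\dim X=0$ (so $F$ is finite) is what makes the plane case clean, in contrast to the general situation of Theorem \ref{main} where $f(X)$ can be a genuine codimension-$2$ subset that is genuinely harder to recover.
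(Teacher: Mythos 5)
Your proposal takes a genuinely different route from the paper, and unfortunately the route has a gap exactly where you yourself flag the difficulty. The paper does not try to repair a dense regular image by adding back finitely many missed values. Instead it proves Lemma~\ref{resol0} (via Theorem~\ref{resol}): for any locally bounded rational function on $\R^2$ there is a \emph{surjective} regular map $\phi:\R^2\to\R^2$, built as a finite composition of double oriented blowings-up and polynomial isomorphisms, such that the composition is regular. Applying this successively to the components $f_1,\dots,f_m$ of $f$ produces a single surjective regular $\phi$ with $g:=f\circ\phi$ regular, and then $\im(g)=\im(f)$ on the nose because $\phi$ is onto. No values are ever lost, so nothing has to be glued back in.

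The gap in your plan is the ``absorb a missing point'' step, and it is not a technicality: it is the whole content of the theorem. You propose to combine a regular map $g_0:\R^2\to\R^m$ (image $\Tt$) with a regular arc $\gamma:\R\to\R^m$ through a missing point $q$ into one regular $G:\R^2\to\R^m$ with $\im(G)=\Tt\cup\{q\}$, but you give no construction that actually produces a rational map with empty real pole set doing this. The two mechanisms you sketch both fail. First, bump-function or piecewise patching is unavailable for regular maps, as you note yourself. Second, the appeal to Theorem~\ref{complement} to ``free up a line'' is a misuse of that theorem: it applies only to algebraic sets of codimension $\geq 2$, and a line in $\R^2$ has codimension $1$ (indeed $\R^2\setminus\{\text{line}\}$ is disconnected, so it cannot even be a polynomial image of $\R^2$). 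Using $\R^2\setminus\{0\}$ instead does not help, because you then still need a single regular map on $\R^2$ whose restriction to the preimage of $\{0\}$ runs the arc $\gamma$ while the complement surjects onto $\Tt$; there is no reason such a map should exist, and the passage ``on one piece we reproduce $g_0$ and on the removed locus we instead evaluate the arc'' is not a definition of a rational map. So the proposal reduces the theorem to a harder-looking statement (``one may always enlarge a $2$-dimensional regular image of $\R^2$ by a prescribed closure point'') that is left unproved and that I do not believe is true in this generality.

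What the paper's argument buys that yours does not is precisely the avoidance of any gluing: by resolving the indeterminacy of $f$ itself with a map that is already surjective from $\R^2$ onto $\R^2$, the image is preserved automatically. The point that $X$ is finite in dimension $2$ is used in the paper too, but in a different way: it makes the local analysis at each indeterminacy point (multiplicities of complex branches, tangent lines, the iteration of double oriented blowings-up interleaved with the aligning isomorphisms of Lemma~\ref{align}) terminate. If you wanted to salvage your plan, the missing ingredient is exactly a replacement for Lemma~\ref{resol0}; without it the iteration over $F$ never gets off the ground.
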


The proof of the previous result is reduced to show the following lemma, which has interest by its own and concerns an `alternative' resolution of the indeterminacy of a locally bounded rational function on $\R^2$ to make it regular, adapted to our situation, in which one does not care on the cardinality of the fibers (that are however generically finite).

\begin{lem}\label{resol0} 
Let $f:\R^2\to\R$ be a locally bounded rational function on $\R^2$. Then there exists a generically finite surjective regular map $\phi:\R^2\to\R^2$ such that $f\circ\phi$ is a regular function on $\R^2$.
\end{lem}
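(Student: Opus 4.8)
The plan is to resolve the indeterminacy of $f$ by a finite composition of blowings-up at real points, to realize each elementary blowing-up as a surjective generically finite regular image of $\R^2$ through an explicit degree-two model, and then to compose everything; the fact that we do not control the fibres is precisely what makes room for these models.

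Write $f=p/q$ with $p,q\in\R[\x,\y]$ relatively prime. Local boundedness forces the polar set $\{q=0\}\cap\R^2$ to contain no curve (along a real curve on which $q$ vanishes and $p$ does not, $f$ is unbounded), so it is a finite set $Z$ of points, at each of which $p$ vanishes too. One checks that there is a finite composition of blowings-up at real points $\sigma\colon S\to\R^2$ with $f\circ\sigma$ regular on $S$: resolve the indeterminacy of the rational map $f\colon\R^2\dashrightarrow\PP^1$ over $Z$ and over the finitely many real infinitely near base points that successively appear; since $\sigma$ is proper, local boundedness is preserved, which prevents $f\circ\sigma$ from having a pole along an exceptional divisor, so $f\circ\sigma$ never attains the value $\infty$ and is a regular function on the smooth real algebraic surface $S$ (covered by finitely many charts isomorphic to $\R^2$). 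Granting that $S$ is itself the image of a surjective generically finite regular map $\rho\colon\R^2\to S$, the map $\phi:=\sigma\circ\rho\colon\R^2\to\R^2$ is regular, surjective, generically finite ($\sigma$ birational, $\rho$ generically finite), and $f\circ\phi=(f\circ\sigma)\circ\rho$ is a regular function composed with a regular map, hence regular. Thus it remains to prove that every smooth real algebraic surface obtained from $\R^2$ by finitely many blowings-up at real points is a surjective generically finite regular image of $\R^2$.

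The basic brick is an explicit surjective regular map $\varrho_a\colon\R^2\to\Bl_a\R^2$ with everywhere invertible Jacobian. For $a=0$, inside $\Bl_0\R^2\subset\R^2\times\PP^1$ one takes
$$
\varrho_0(u,v)=\Bigl(\Bigl(\tfrac{u(1-v^2)}{1+v^2},\ \tfrac{2uv}{1+v^2}\Bigr),\ [1-v^2:2v]\Bigr),
$$
which is regular ($1+v^2$ never vanishes, $1-v^2$ and $2v$ never vanish together), has invertible Jacobian everywhere, and is surjective: the second factor $v\mapsto[1-v^2:2v]$ is a degree-two map $\A^1\to\PP^1$ surjective on real points, so the exceptional divisor is covered, and a point of $\R^2\setminus\{0\}$ is reached by solving explicitly. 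It is generically two-to-one, but one-to-one over the strict transform of the $\x$-axis; applying a linear change of coordinates one gets, for every real line $L$ through $a$, a model $\varrho_a^L$ that is one-to-one over the strict transform of $L$. The realization is then obtained by induction on the number of blowings-up: if $\psi\colon\R^2\to S$ is already surjective with invertible Jacobian and $S'=\Bl_pS$ for a real point $p$, pick $a\in\psi^{-1}(p)$; the universal property of blowing-up lifts $\psi$ to an \'etale morphism from the blow-up of $\R^2$ along $\psi^{-1}(p)$ onto $S'$, surjective because $\psi$ is and because this lift is an isomorphism on each exceptional line, and one is left with realizing a blow-up of $\R^2$ along the finite set $\psi^{-1}(p)$, which is done by iterating the models $\varrho_a^L$.

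The main obstacle is the termination of this induction. A single blowing-up of $S$ forces one a priori to blow $\R^2$ up at \emph{all} the points of the fibre $\psi^{-1}(p)$, and the cardinality of such fibres doubles at each step, so the naive induction does not close up. The point to exploit is the remaining freedom in the models $\varrho_a^L$: they are one-to-one over the strict transform of the chosen line $L$, so if at each stage one selects $L$ passing through the next centre, the fibres over the successive centres are kept reduced to a single point and the degree only multiplies by $2$ at each step, so the process terminates. Carrying this out requires organizing the resolution $\sigma$ — choosing the order of the blowings-up along the tree of infinitely near points of each point of $Z$, and possibly performing extra blowings-up so that the needed lines are always available — so that this matching can always be performed; this bookkeeping is the delicate heart of the proof.
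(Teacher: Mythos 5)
Your explicit model $\varrho_0$ is, up to a change of coordinates, exactly the map $\psi$ of the paper's Lemma~\ref{Lem-bup}, so you have identified the right elementary brick: a surjective generically two-to-one regular map $\R^2\to\Bl_a\R^2$ with nowhere-vanishing Jacobian. The genuine gap is the termination argument, and it is not merely deferred bookkeeping; the strategy you sketch does not close up as stated. Your map $\varrho_a^L$ is injective over the strict transform of $L$, which meets the exceptional curve $E_a$ at the single point corresponding to the direction of $L$; to keep the fibre over the next infinitely near centre $c'$ equal to one point you would have to take $L$ to be the real line through $a$ in the direction of $c'$. Three obstacles block this. First, $c'$ may be a non-real point of $E_a(\C)$ (this happens precisely when $\{q=0\}$ has a non-real tangent at $a$), and then no real $L$ exists. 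Second, the resolution tree may branch at $a$: if several infinitely near centres sit on $E_a$, at most one of them can be reached with fibre a single point, and the others double. Third, when $c'$ is a corner point of $E_a$ with the strict transform of an earlier exceptional curve, the associated tangent direction is the exceptional direction, which is not the direction of any affine line in $\R^2$, so again no $L$ works. "Performing extra blowings-up" does not obviously remove any of these obstructions.

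The paper sidesteps all three by never attempting to realize the resolution surface $S$ as an image of $\R^2$. It composes maps $\R^2\to\R^2$ directly, alternating a polynomial isomorphism (Lemma~\ref{align}, which pushes the finitely many poles onto a half-line with the worst one at the origin and with control on tangents) with one double oriented blowing-up at the origin. Progress is measured not by the cardinality of the pole set, which is allowed to grow, but by the maximal multiplicity $M$ of the denominator at its real zeros together with the number of points where $M$ is attained. Whenever the curve $\{Q=0\}$ has at least two tangents at the chosen centre, the multiplicity at each new point drops (cases (i), (ii) and the multi-tangent estimate). The delicate case is a unique real tangent with every branch in situation (iii.2), where the multiplicity is a priori preserved; the paper proves termination there by a conjugation argument: because $\{Q=0\}\cap\R^2$ is finite, its branches occur in complex-conjugate pairs, and iterating the algorithm must eventually split a conjugate pair into two distinct tangents, after which the multiplicity drops. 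This multiplicity-based loop invariant, rather than a matching of lines to infinitely near centres, is the missing idea.
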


Assume Lemma \ref{resol0} and let us see how the proof of Theorem \ref{main2} arises.

\begin{proof} 
Let $f:=(f_1,\cdots,f_m):\R^2\to\R^m$ be a regulous maps. By Lemma \ref{resol0} one can find a surjective regular map $\phi_1:\R^2\to\R^2$ such that $f_1\circ\phi_1$ is regular. Consider the regulous function $g_2:=f_2\circ\phi_1$. Again by Lemma \ref{resol0} one can find a surjective regular map $\phi_2:\R^2\to\R^2$ such that $g_2\circ\phi_2$ is regular. Clearly, the composition $f_1\circ\phi_1\circ\phi_2$ is also regular. We proceed inductively with $f_3,\ldots,f_m$ and obtain surjective regular maps $\phi_k:\R^2\to\R^2$ such that $f_k\circ\phi_1\circ\cdots\circ\phi_k$ is a regular function for $k=1,\ldots,m$. At the end we have a surjective regular map $\phi:=\phi_1\circ\cdots\circ\phi_m:\R^2\to\R^2$ such that $g:=f\circ\phi$ is a regular map on $\R^2$. As $\phi$ is surjective, we have $g(\R^2)=(f\circ\phi)(\R^2)=f(\R^2)$, as required.
\end{proof}

\subsection*{Structure of the article.}
The article is organized as follows. In Section \ref{s2} we prove Theorem \ref{complement} while in Section \ref{s3} we prove an improved version of Lemma \ref{resol0}. Our argument fundamentally relies on the properties of the so-called double oriented blowings-up. This tools allows us to reformulate Lemma \ref{resol0} using regular maps satisfying the arc lifting property (see Theorem \ref{SurjectiveArcLift}), which is commonly used in the analytic setting to study singularities \cite{fp}. Finally in Appendix \ref{s4} we propose a regulous and a regular map $f,g:\R^2\to\R^2$ whose common image is the open quadrant $\Qq:=\{\x>0,\y>0\}$. Theses maps are much more simpler than the best known polynomial maps $\R^2\to\R^2$ that have $\Qq$ as their image \cite{fg1,fu5,fgu4}. In addition, the verification that the images of $f,g$ is $\Qq$ is quite straightforward and does not require the enormous effort needed for the known polynomial maps used in \cite{fg1,fu5,fgu4} to represent $\Qq$. Recall that the systematic study of the polynomial and regular images began in 2005 with the first solution to the open quadrant problem \cite{fg1}.

\section{Complement of an algebraic set of codimension at least $2$}\label{s2}

The purpose of this section is to prove Theorem \ref{complement}. The proof is conducted in several technical steps and some of them have interest by their own. 

\subsection{Finite projections of complex algebraic sets}
Let $X\subset\R^n$ be a (real) algebraic set and let $I_\R(X)$ be the ideal of those polynomials $f\in\R[\x]$ such that $f(x)=0$ for all $x\in X$. The zero-set $\widetilde{X}\subset\C^n$ of $I_\R(X)\C[\x]$ is a complex algebraic set that is called the \em complexification of $X$\em. The ideal $I_\C(\widetilde{X})$ of those polynomials $F\in\C[\x]$ such that $F(z)=0$ for all $z\in\widetilde{X}$ coincides with $I_\R(X)\C[\x]$. Consequently $\widetilde{X}$ is the smallest complex algebraic subset of $\C^n$ that contains $X$ and
$$
\C[\x]/I_\C(\widetilde{X})\cong(\R[\x]/I_\R(X))\otimes\C.
$$
In particular, the real dimension of $X$ coincides with the complex dimension of $\widetilde{X}$. A key result in this section is the following result concerning projections of complex algebraic sets \cite[Thm. 2.2.8]{jp}. The zero set in $\C^n$ of an ideal $\gta\subset\C[\x]$ will be denoted ${\mathcal Z}(\gta)$.

\begin{thm}[Projection Theorem]\label{pt} 
Let $\gta\subset\C[\x]$ be a non-zero ideal. Suppose that $\gta$ contains a polynomial $F$ that is regular with respect to $\x_n$, that is, $F:=\x_n^m+a_{m-1}(\x')\x_n^{m-1}+\cdots+a_0(\x')$ for some polynomials $a_i\in\C[\x']:=\C[\x_1,\ldots,\x_{n-1}]$. Let $\Pi:\C^n\to\C^{n-1}$ be the projection $(x_1,\ldots,x_n)\mapsto(x_1,\dots,x_{n-1})$ onto the first $n-1$ coordinates of $\C^n$. Define $\gta':=\gta\cap\C[\x']$ and let $X':={\mathcal Z}(\gta')$ be the its zero set. Then $\Pi(X)=X'$ is a complex algebraic subset of $\C^{n-1}$.
\end{thm}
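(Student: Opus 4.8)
The statement to prove is the classical elimination/projection theorem: if $\gta \subset \C[\x]$ contains a monic polynomial $F$ in $\x_n$ with coefficients in $\C[\x']$, then the projection $\Pi(X)$ of $X = \ceros(\gta)$ onto the first $n-1$ coordinates equals the zero set $X' = \ceros(\gta')$ of the elimination ideal $\gta' = \gta \cap \C[\x']$. The approach is the standard one via resultants and the Nullstellensatz. First I would observe that the inclusion $\Pi(X) \subseteq X'$ is immediate: if $(x', x_n) \in X$ and $g \in \gta' \subseteq \gta$, then $g$ depends only on $\x'$ and $g(x') = g(x', x_n) = 0$, so $x' \in X'$. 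Moreover $X'$ is clearly a complex algebraic set, being the zero set of an ideal. So the content is the reverse inclusion $X' \subseteq \Pi(X)$.

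For the reverse inclusion, fix $a = (a_1, \dots, a_{n-1}) \in X'$; I must produce $t \in \C$ with $(a, t) \in X$, i.e. with $g(a, t) = 0$ for every $g \in \gta$. The key step is to consider, for each $g \in \gta$, the resultant $R_g(\x') := \operatorname{Res}_{\x_n}(F, g) \in \C[\x']$. Since $F$ is monic in $\x_n$, standard resultant theory gives $R_g \in (F, g)\C[\x] \cap \C[\x'] \subseteq \gta \cap \C[\x'] = \gta'$; hence $R_g(a) = 0$ for all $g \in \gta$ because $a \in X' = \ceros(\gta')$. Now specialize at $\x' = a$: because $F$ stays monic of the same degree $m$ under this specialization, the vanishing $R_g(a) = 0 = \operatorname{Res}_{\x_n}(F(a, \x_n), g(a, \x_n))$ forces $F(a, \x_n)$ and $g(a, \x_n)$ to have a common root in $\C$ (here one uses that $\C$ is algebraically closed, that a resultant of two univariate polynomials vanishes iff they share a root when the leading coefficient of at least one does not degenerate, and that $F(a, \x_n)$ has degree exactly $m \geq 1$).

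The remaining task is to pass from "for each individual $g$, $F(a,\x_n)$ and $g(a,\x_n)$ share a root" to "$F(a,\x_n)$ and all the $g(a,\x_n)$ simultaneously share a root." Write $F(a, \x_n) = \prod_{j=1}^{r}(\x_n - t_j)^{e_j}$ with distinct $t_j \in \C$. For each $g \in \gta$, at least one $t_j$ is a root of $g(a, \x_n)$. I would argue by contradiction: if no single $t_j$ were a common root of all the $g(a,\x_n)$, then for each $j$ pick $g_j \in \gta$ with $g_j(a, t_j) \neq 0$; the finitely many $g_j$ (together with $F$) then have no common root in the $\x_n$-line over $a$, so in $\C[\x_n]$ the ideal generated by $F(a,\x_n), g_1(a,\x_n), \dots, g_r(a,\x_n)$ is all of $\C[\x_n]$ — but one checks this contradicts that their resultant-type obstruction must vanish, or more directly, one notes $\gta$ is finitely generated (Hilbert basis theorem), applies the pigeonhole/common-root argument to a finite generating set, and concludes some $t_j$ is a common root of all generators, hence of all of $\gta$. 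Then $(a, t_j) \in X$ and $\Pi(a, t_j) = a$, completing the proof. The main obstacle — and the only place needing care — is exactly this last simultaneous-common-root step: handling the multiplicities $e_j$ correctly and invoking finite generation so that the pigeonhole argument has only finitely many polynomials to juggle; everything else is bookkeeping with resultants and the fact that monicity in $\x_n$ is preserved under specialization of $\x'$.
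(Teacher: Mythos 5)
The paper states Theorem~\ref{pt} as a citation to \cite[Thm.~2.2.8]{jp} and gives no proof of its own, so there is nothing in the paper to compare your argument against; I therefore assess it on its merits. Your overall plan --- the easy inclusion $\Pi(X)\subseteq X'$, then resultants $R_g:=\operatorname{Res}_{\x_n}(F,g)\in(F,g)\C[\x]\cap\C[\x']\subseteq\gta'$ to force $F(a,\cdot)$ and $g(a,\cdot)$ to share a root over each $a\in X'$ --- is a standard route to elimination. The other standard route, closer to what sources such as \cite{jp} typically use, is to note that $F\in\gta$ monic makes $\C[\x]/\gta$ integral over $\C[\x']/\gta'$ and then combine lying-over with the Nullstellensatz.

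There is, however, a genuine gap at exactly the step you flag as delicate: passing from ``each $g\in\gta$ shares a root with $F(a,\cdot)$'' to a \emph{simultaneous} common root, and neither of your two suggested fixes works as written. The pigeonhole version fails: knowing that each generator $g_i$ vanishes at \emph{some} root $t_{j(i)}$ of $F(a,\cdot)$ does not produce one $t_j$ at which \emph{all} generators vanish --- you are intersecting nonempty subsets of the finite set $\{t_1,\dots,t_r\}$, and that intersection can well be empty. The ``resultant-type obstruction'' version is not precise either: the resultant is defined for a pair, and you have not exhibited a single scalar forced to vanish. The standard repair that stays inside your framework is a generic linear combination. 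If no $t_j$ were a common zero, pick for each $j$ some $g_j\in\gta$ with $g_j(a,t_j)\neq 0$. Since $\C$ is infinite, for generic $\lambda\in\C^r$ the element $g^{\ast}:=\sum_j\lambda_j g_j\in\gta$ satisfies $g^{\ast}(a,t_j)\neq 0$ for every $j$ (the bad $\lambda$ lie in a finite union of proper hyperplanes). Then $F(a,\cdot)$ and $g^{\ast}(a,\cdot)$ have no common root, so $\operatorname{Res}_{\x_n}(F(a,\cdot),g^{\ast}(a,\cdot))\neq 0$; since $F$ is monic this specialization of the resultant equals $R_{g^{\ast}}(a)$, contradicting $R_{g^{\ast}}\in\gta'$ and $a\in X'={\mathcal Z}(\gta')$. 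With this insertion your proof is complete.
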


\subsection{Recovering an algebraic set from its projections}
Denote 
$$
\pi:\R^n\to\R^{n-1},\ (x_1,\ldots,x_n)\to(x_1,\ldots,x_{n-1})
$$ 
the projection onto the first $n-1$ coordinates of $\R^n$. Pick $\vec{v}:=(v_1,\ldots,v_n)\in\R^n\setminus\{\x_n=0\}$ and consider the isomorphism 
\begin{equation}\label{iso}
\phi_{\vec{v}}:\R^n\to\R^n,\ (x_1,\ldots,x_n)\mapsto(x_1-\tfrac{v_1}{v_n}x_n,\ldots,x_{n-1}-\tfrac{v_{n-1}}{v_n}x_n,\tfrac{1}{v_n}x_n)
\end{equation} 
that keeps fixed the hyperplane $H_0:=\{\x_n=0\}$ and maps the vector $\vec{v}$ to the vector $\vec{\tt e}_n:=(0,\ldots,0,1)$. Its inverse map is
$$
\phi_{\vec{v}}^{-1}:\R^n\to\R^n,\ (y_1,\ldots,y_n)\mapsto(y_1+v_1y_n,\ldots,y_{n-1}+v_{n-1}y_n,v_ny_n).
$$
Consider the projection
\begin{multline}\label{proj1}
\pi_{\vec{v}}:=\pi\circ\phi_{\vec{v}}:\R^n\to\R^{n-1}\times\{0\}\equiv\R^{n-1},\\ 
(x_1,\ldots,x_n)\mapsto(x_1-\tfrac{v_1}{v_n}x_n,\ldots,x_{n-1}-\tfrac{v_{n-1}}{v_n}x_n,0)\equiv(x_1-\tfrac{v_1}{v_n}x_n,\ldots,x_{n-1}-\tfrac{v_{n-1}}{v_n}x_n)
\end{multline}
of $\R^n$ onto $\R^{n-1}$ (identified with $\{\x_n=0\}$) in the direction of $\vec{v}$. We have 
$$
\pi_{\vec{v}}(y_1+\tfrac{v_1}{v_n}\lambda y_n,\ldots,y_{n-1}+\tfrac{v_{n-1}}{v_n}\lambda y_n,\lambda y_n)=(y_1,\ldots,y_{n-1})
$$
for each $\lambda\in\R$ (in particular, for $\lambda=v_n$). Let $X\subset\R^n$ be a real algebraic set of dimension $d\leq n-2$, let $\widetilde{X}$ be its complexification and let $\gta:=I_\C(\widetilde{X})=I_\R(X)\C[\x]$. Let $F\in I_\R(X)\setminus\{0\}$ and write $F:=F_0+F_1+\cdots+F_m$ as the sum of its homogeneous components. If $F_m(\vec{v})\neq0$, then $F(y_1+v_1y_n,\ldots,y_{n-1}+v_{n-1}y_n,v_ny_n)$ is regular with respect to $y_n$. We extend $\pi_{\vec{v}}$ to $\Pi_{\vec{v}}:\C^n\to\C^{n-1}$ and observe that, if $F_m(\vec{v})\neq0$, then $\Pi_{\vec{v}}(\widetilde{X})$ is by Theorem \ref{pt} an algebraic subset of $\C^{n-1}$ of dimension $\leq d$. Consequently, the Zariski closure $\ol{\pi_{\vec{v}}(X)}^{\zar}$ of $\pi_{\vec{v}}(X)$ is contained in $\Pi_{\vec{v}}(\widetilde{X})\cap\R^{n-1}$, which is a real algebraic subset of $\R^{n-1}$ of dimension $\leq d$. 

\paragraph{}Let $p\in\R^n\setminus X$ and consider the algebraic cone $\Cc\subset\R^n$ of vertex $p$ and basis $X$.
As $X$ has real dimension $d\leq n-2$, the real algebraic set $\Cc$ has (real) dimension $d+1\leq n-1$. Let $\widetilde{\Cc}\subset\C^n$ be the complexification of $\Cc$, which is a complex algebraic set of (complex) dimension $d+1$. As $X\subset\Cc$, the inclusion $\widetilde{X}\subset\widetilde{\Cc}$ holds. In addition, as $\Cc$ is a cone, then $\widetilde{\Cc}$ is also a cone. Denote $\vec{\Cc}:=\{\vec{v}\in\R^n:\ \vec{v}=\overrightarrow{px},\ x\in\Cc\}$ and $\vec{\widetilde{\Cc}}:=\{\vec{w}\in\C^n:\ \vec{w}=\overrightarrow{pz},\ z\in\widetilde{\Cc}\}$.

\paragraph{}\label{point} Pick a vector $\vec{v}\in\R^n\setminus(\vec{\Cc}\cup\{F_m=0\})$. We claim: $\pi_{\vec{v}}(p)\not\in\ol{\pi_{\vec{v}}(X)}^{\zar}$. 
\begin{proof}
As $\ol{\pi_{\vec{v}}(X)}^{\zar}\subset\Pi_{\vec{v}}(\widetilde{X})\cap\R^{n-1}$, it is enough to check that $\pi_{\vec{v}}(p)\not\in\Pi_{\vec{v}}(\widetilde{X})$. If $\pi_{\vec{v}}(p)\in\Pi_{\vec{v}}(\widetilde{X})$, there exists $z\in\widetilde{X}$ such that $\Pi_{\vec{v}}(z)=\pi_{\vec{v}}(p)$. Thus, there exists $\lambda\in\R$ and $\mu\in\C$ such that 
$$
p+\lambda\vec{v}=\pi_{\vec{v}}(p)=\Pi_{\vec{v}}(z)=z+\mu\vec{v}.
$$
Thus, $(\lambda-\mu)\vec{v}=\overrightarrow{pz}\in\vec{\widetilde{\Cc}}$. As $p\not\in X$, it holds $p\neq z$, so $\lambda\neq\mu$ and $\vec{v}\in\vec{\widetilde{\Cc}}\cap\R^n=\vec{\Cc}$, which is a contradiction. Thus, $\pi_{\vec{v}}(p)\not\in\ol{\pi_{\vec{v}}(X)}^{\zar}$, as claimed.
\end{proof}

\begin{lem}\label{recover}
Let $X\subset\R^n$ be a real algebraic set of dimension $d\leq n-2$ and let $\Omega\subset\R^n\setminus\{\x_n=0\}$ be a non-empty open set. Then, there exist $r\leq n+1$ vectors $\vec{v}_1,\ldots,\vec{v}_r\in\Omega$ such that
$$
X=\bigcap_{i=1}^r\pi_i^{-1}(\ol{\pi_j(X)}^{\zar})
$$
where $\pi_i:=\pi_{\vec{v}_i}$ for $i=1,\ldots,r$. 
\end{lem}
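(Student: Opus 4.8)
The plan is to build the vectors $\vec v_1,\dots,\vec v_r$ one at a time, each time shrinking the "error set" $Y_i:=\bigcap_{j\le i}\pi_j^{-1}(\ol{\pi_j(X)}^{\zar})\supset X$ until it coincides with $X$. The paragraph labelled \ref{point} is exactly the per-vector input: for any $p\in\R^n\setminus X$ and any $\vec v$ outside the measure-zero (indeed proper algebraic) set $\vec\Cc_p\cup\{F_m=0\}$ — where $\Cc_p$ is the cone over $X$ with vertex $p$ and $F\in I_\R(X)\setminus\{0\}$ is a fixed polynomial with homogeneous components $F_0+\cdots+F_m$ — one has $\pi_{\vec v}(p)\notin\ol{\pi_{\vec v}(X)}^{\zar}$, hence $p\notin\pi_{\vec v}^{-1}(\ol{\pi_{\vec v}(X)}^{\zar})$. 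So a single well-chosen $\vec v$ removes a given bad point $p$; the work is to show finitely many $\vec v$'s, all drawn from the prescribed open set $\Omega$, suffice to remove \emph{all} of $\R^n\setminus X$ simultaneously.

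First I would fix $F\in I_\R(X)\setminus\{0\}$ with top form $F_m$, and note $\{F_m=0\}$ is a proper (complex, hence real) algebraic hypersurface in the space of directions, so $\Omega\setminus\{F_m=0\}$ is still a non-empty open set; all vectors will be taken there, which guarantees the regularity-in-$\x_n$ hypothesis needed to invoke the Projection Theorem~\ref{pt} and the inclusion $\ol{\pi_{\vec v}(X)}^{\zar}\subset\Pi_{\vec v}(\widetilde X)\cap\R^{n-1}$ established just before paragraph~\ref{point}. Then I would argue by descending induction on $\dim Y_i$. Having chosen $\vec v_1,\dots,\vec v_i\in\Omega$, suppose $Y_i\supsetneq X$. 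Each $Y_i$ is a real algebraic set (finite intersection of preimages of algebraic sets under linear maps), and $\dim Y_i\le d+1\le n-1$ after the first step, since $Y_1\subset\pi_{\vec v_1}^{-1}(\ol{\pi_{\vec v_1}(X)}^{\zar})$ and the base has dimension $\le d$ while the fibre is a line. Pick an irreducible component $Z$ of $Y_i$ not contained in $X$ (possible since $Y_i\not\subset X$), choose a point $p\in Z\setminus X$, and apply paragraph~\ref{point}: a generic $\vec v_{i+1}\in\Omega$ (avoiding $\vec\Cc_p\cup\{F_m=0\}$) satisfies $p\notin\pi_{\vec v_{i+1}}^{-1}(\ol{\pi_{\vec v_{i+1}}(X)}^{\zar})$, hence $p\notin Y_{i+1}$.

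The key point — and the place one must be careful rather than merely wave hands — is upgrading "removes the single point $p$" to "strictly drops the dimension of the offending component", so that the process terminates in $\le n+1$ steps rather than potentially never. For this I would observe that $X\subset\pi_{\vec v_{i+1}}^{-1}(\ol{\pi_{\vec v_{i+1}}(X)}^{\zar})=:W_{i+1}$ always, and that the component $Z$ satisfies $\dim(Z\cap W_{i+1})<\dim Z$: indeed $Z\not\subset W_{i+1}$ (we just exhibited $p\in Z\setminus W_{i+1}$), and $Z$ is irreducible, so intersecting with the proper algebraic subset $W_{i+1}$ lowers dimension. Doing this for \emph{every} component $Z$ of $Y_i$ with $Z\not\subset X$ — there are finitely many, each contributing a proper algebraic condition on the direction, whose union is still proper — one genuine choice of $\vec v_{i+1}\in\Omega$ works for all of them at once, giving $\dim\bigl((Y_i\setminus X)^{\zar}\bigr)$ strictly smaller, while components already inside $X$ stay inside $X$. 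Since $\dim Y_1\le n-1$ and strictly decreases at each step until the "bad part" is empty, after at most $n-1$ further vectors — so $r\le n$, comfortably $\le n+1$ — we reach $Y_r=X$. (The slack $n+1$ vs.\ $n$ in the statement leaves room for the bookkeeping of the first step, where one may prefer to use two directions to cut $\R^n$ down to dimension $d+1$ before the descent begins; either way the bound holds.) This completes the plan; the only genuinely delicate item is the component-by-component dimension-drop argument, everything else is an assembly of Theorem~\ref{pt}, the cone construction, and paragraph~\ref{point}.
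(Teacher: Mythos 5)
Your proposal follows essentially the same route as the paper: track the invariant $e(Y_i)=\dim(Y_i\setminus X)$, use paragraph~\ref{point} to pick, for each irreducible component of the current $Y_i$ not inside $X$, a single direction in $\Omega$ (avoiding the finite union of proper algebraic sets $\vec\Cc_{p_j}\cup\{F_m=0\}$) that misses the chosen witness points simultaneously, and conclude via irreducibility that the offending dimension strictly drops; termination after $\le d+3\le n+1$ choices. The only blemish is the side claim ``$r\le n$'': since $e(Y_1)$ can be as large as $d+1=n-1$ and must drop to $-1$, one needs $d+2$ further vectors, giving $r\le d+3\le n+1$ exactly as in the statement; your ``comfortably $\le n+1$'' is the correct bound.
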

\begin{proof} 
Given an algebraic set $Z\subset\R^n$ define $e(Z):=\dim(Z\setminus X)$, that is, the dimension of the constructible set $Z\setminus X$. If $Z^1,\ldots,Z^k$ are the irreducible components of $Z$, then $e(Z)$ is either equal to $-1$ if $Z^j\subset X$ for $1\leq j\leq k$ or the maximum of the dimensions of the irreducible components of $Z$ that are not contained in $X$ otherwise.

Pick a vector $\vec{v}_1\in\Omega$ and denote $\pi_1:=\pi_{\vec{v}_1}$. Let $Y_1^1,\dots,Y_1^s$ be the irreducible components of the algebraic set $Y_1:=\pi_1^{-1}\big(\ol{\pi_1(X)}^{\zar}\big)$, which has dimension $\leq d+1$. If $e(Y_1)=-1$, we have $X=Y_1$ and we are done. Assume $e(Y_1)\geq0$. For each $Y_1^j$ not contained in $X$ pick a point $p_j\in Y_1^j\setminus X$. By \ref{point} there exists a vector $\vec{v}_2\in\Omega$ such that each $p_j\not\in Y_2:=\pi_2^{-1}\big(\ol{\pi_2(X)}^{\zar}\big)$, where $\pi_2:=\pi_{\vec{v}_2}$. Let $T$ be an irreducible component of $Y_1\cap Y_2$. We claim: $\dim(T)<e(Y_1)$. 

As $T$ is not contained in $X$ but $T\subset Y_1$, there exists an irreducible component $Y_1^j$ of $Y_1$ not contained in $X$ such that $T\subset Y_1^j$. As $p_j\not\in Y_2$, we have $\dim(T)\leq\dim(Y_1^j\cap Y_2)<\dim(Y_1^j)\leq e(Y_1)$, as claimed. 

Consequently, $e(Y_1\cap Y_2)$ is strictly smaller than $e(Y_1)$. If $e(Y_1\cap Y_2)\geq0$, we pick a point $q_j\in Y_{12}^j\setminus X$ for each irreducible component $Y_{12}^j$ of $Y_1\cap Y_2$ not contained in $X$ (and indexed $j=1,\ldots,\ell$). By \ref{point} there exists a vector $\vec{v}_3\in\Omega$ such that each $q_j\not\in Y_3:=\pi_3^{-1}\big(\ol{\pi_3(X)}^{\zar}\big)$, where $\pi_3:=\pi_{\vec{v}_3}$. Again, this implies that $e(Y_1\cap Y_2\cap Y_3)<e(Y_1\cap Y_2)$. 

We repeat the process $r\leq d+3\leq n+1$ times to find vector $\vec{v}_1,\ldots,\vec{v_r}\in\Omega$ such that $e(Y_1\cap\cdots\cap Y_r)=-1$ where $Y_i:=\pi_i^{-1}(\ol{\pi_i(X)}^{\zar})$ and $\pi_i:=\pi_{\vec{v}_i}$. Consequently, $X=\bigcap_{j=1}^rY_j$, as required.
\end{proof}

\begin{lem}\label{cod2}
Let $X\subset\R^n$ be an algebraic set of codimension $\geq1$. Then there exists a polynomial diffeomorphism $\phi:\R^n\to\R^n$ such that $\phi(X)\subset\{-\x_{n-1}<\x_n<\x_{n-1}\}$.
\end{lem}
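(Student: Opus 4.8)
\textbf{Proof proposal for Lemma \ref{cod2}.}

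The plan is to reduce the statement to the following geometric fact: there exists a line $\ell\subset\R^n$ through the origin such that, after a linear change of coordinates sending $\ell$ to the $\x_n$-axis, the projection of $X$ from the direction of $\ell$ onto $\{\x_n=0\}$ behaves nicely, allowing us to squeeze $X$ into the cone $\{-\x_{n-1}<\x_n<\x_{n-1}\}$. The key observation is that the cone $\{-\x_{n-1}<\x_n<\x_{n-1}\}$ is, up to a linear change of coordinates, the complement of a closed half-space shifted suitably; more concretely, after the substitution $\x_{n-1}\mapsto\x_{n-1}+\x_n^2$ (or a similar polynomial shear), any algebraic hypersurface can be pushed away from the `bad' region. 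So the first step is to record that it suffices to find a polynomial diffeomorphism carrying $X$ into the region $\{\x_{n-1}>|\x_n|\}$, and then note that such a region contains, for instance, the translate of $\{\x_{n-1}>0\}$ by a parabola.

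Concretely, I would proceed as follows. Since $X$ has codimension $\geq 1$, pick a non-zero polynomial $F\in I_\R(X)$; write $F=F_0+\cdots+F_m$ as its homogeneous decomposition with $F_m\neq 0$. After a linear change of coordinates we may assume $F_m(\vec{\tt e}_n)\neq 0$, so that $F$ is monic of degree $m$ in $\x_n$ (up to a nonzero scalar), i.e.\ $F$ is regular with respect to $\x_n$. This guarantees that the fibres of $\pi|_X$ over points of $\R^{n-1}$ are finite; hence $X$ is contained in a set of the form $\{|\x_n|\leq P(\x_1,\ldots,\x_{n-1})\}$ for some polynomial $P$ (bounding the roots of $F$ viewed as a polynomial in $\x_n$ by the standard Cauchy bound in terms of the coefficients $a_i(\x')$). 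Next, apply the polynomial diffeomorphism
\[
\phi:\R^n\to\R^n,\qquad (x_1,\ldots,x_n)\mapsto\bigl(x_1,\ldots,x_{n-2},\ x_{n-1}+P(x_1,\ldots,x_{n-1})^2+1,\ x_n\bigr),
\]
(or a similar shear; one has to be slightly careful because $P$ involves $\x_{n-1}$, so I would first replace $P$ by a bound depending only on $x_1,\ldots,x_{n-2}$ after one further generic linear change, or iterate the squeezing coordinate by coordinate). After this substitution, every point of $\phi(X)$ has its $(n-1)$-th coordinate strictly larger than the absolute value of its $n$-th coordinate, i.e.\ $\phi(X)\subset\{-\x_{n-1}<\x_n<\x_{n-1}\}$, as required. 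Finally one checks that $\phi$ is indeed a polynomial diffeomorphism: it is a triangular (de Jonquières-type) automorphism of $\R^n$, with polynomial inverse obtained by back-substitution.

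The main obstacle I expect is the dependence of the bound $P$ on $\x_{n-1}$ itself: the Cauchy root bound for $F$ as a polynomial in $\x_n$ naturally involves all of $\x_1,\ldots,\x_{n-1}$, so a single shear in the $\x_{n-1}$-direction does not immediately produce a strict inequality. The fix is either (i) to perform a preliminary generic linear change of coordinates so that $F$ becomes regular with respect to $\x_n$ \emph{and} the relevant leading behaviour lets us bound $|\x_n|$ on $X$ by a polynomial in $x_1,\ldots,x_{n-2}$ only (using that $X$ has codimension $\geq1$, its projection to the $(x_1,\ldots,x_{n-2})$-hyperplane is not dense only if we are unlucky, but a generic choice handles the bookkeeping), or (ii) to iterate: first squeeze in one coordinate, then re-examine. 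Either way the argument is elementary once the root bound is set up; the delicate point is purely the choice of coordinates making the bound depend on few enough variables, and this is where I would spend the care. The desired inclusion $\phi(X)\subset\{-\x_{n-1}<\x_n<\x_{n-1}\}$ then follows immediately, and this cone-shaped open set is exactly what is needed to later realize complements of high-codimension algebraic sets as polynomial images (via Lemma \ref{recover} and Theorem \ref{pt}).
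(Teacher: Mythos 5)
Your overall strategy---establish a polynomial bound between coordinates on $X$, then push $X$ into the cone by a triangular (de Jonqui\`eres-type) shear in the $\x_{n-1}$ variable---is the right device and is in the same spirit as what the paper does. But you bound the wrong coordinate, and this produces a genuine gap, not just a bookkeeping nuisance.

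After your linear change you obtain $|x_n|\le P(x_1,\dots,x_{n-1})$ on $X$ and propose the shear $x_{n-1}\mapsto x_{n-1}+P^2+1$. There are two independent problems. First, as you note, the added term depends on $x_{n-1}$, so this is not a polynomial automorphism. Second, and more fundamentally, even if $P$ depended only on $x_1,\dots,x_{n-2}$ the argument would still fail: the target inequality $x_{n-1}+P^2+1>|x_n|$ only uses $|x_n|\le P$ and hence reduces to $x_{n-1}>-(P^2-P+1)$, which is false when $x_{n-1}$ is very negative; and on $X$ the coordinate $x_{n-1}$ is in general unbounded below (take $X=\{\x_n=\x_{n-1}\}$). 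Your proposed fix (i) is also not achievable: for $X=\{\x_n=\x_{n-1}\}$ no linear change of coordinates produces a bound $|x_n|\le P(x_1,\dots,x_{n-2})$ on $X$, since the fibre of $X$ over every $(x_1,\dots,x_{n-2})$ is an entire line. Fix (ii) is too vague to assess.

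The repair is to bound the \emph{shear coordinate} $x_{n-1}$, not $x_n$: choose the linear change so that $F$ is regular with respect to $\x_{n-1}$, i.e.\ the leading form $F_m$ satisfies $F_m(\vec{\tt e}_{n-1})\neq 0$. The Cauchy root bound then yields $|x_{n-1}|\le Q(x_1,\dots,x_{n-2},x_n)$ on $X$ for some $Q\in\R[\x_1,\dots,\x_{n-2},\x_n]$, crucially independent of $\x_{n-1}$. Now
$$
\phi(x_1,\dots,x_n):=\bigl(x_1,\dots,x_{n-2},\ x_{n-1}+Q(x_1,\dots,x_{n-2},x_n)^2+x_n^2+1,\ x_n\bigr)
$$
is a genuine polynomial automorphism, and for $x\in X$ the new $(n-1)$-th coordinate satisfies
$$
x_{n-1}+Q^2+x_n^2+1\ \ge\ -Q+Q^2+x_n^2+1\ =\ (Q-\tfrac12)^2+\tfrac34+x_n^2\ >\ |x_n|,
$$
so $\phi(X)\subset\{-\x_{n-1}<\x_n<\x_{n-1}\}$. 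This is essentially what the paper does, except that instead of a Cauchy bound it obtains a quadratic lower bound on $\x_{n-1}$ projectively: it arranges $(0:\cdots:0:1:0)\notin\widehat{X}$ in $\R\PP^n$, deduces $\veps^2\x_{n-1}^2<1+\x_1^2+\cdots+\x_{n-2}^2+\x_n^2$ on $X$, and then applies the same kind of shear. Both routes succeed once the coordinate you are going to shear is the one you bound in terms of the others.
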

\begin{proof}
Let $\widehat{X}$ be the Zariski closure of $X$ in the projective space $\R\PP^n$. As $\widehat{X}$ has codimension $\geq1$, we may assume that $(0:\cdots:0:1:0)\not\in\widehat{X}$. The sets $\{\x_0^2+\cdots+\x_{n-2}^2+\x_n^2\leq\veps^2\x_{n-1}^2\}$ for $\veps>0$ constitute a basis of neighborhoods of $(0:\cdots:0:1:0)$ in $\R\PP^n$. Let $0<\veps<1$ be such that $\widehat{X}\cap\{\x_0^2+\cdots+\x_{n-2}^2+\x_n^2\leq\veps^2\x_{n-1}^2\}=\varnothing$. As $\veps^2\x_{n-1}^2\geq-\veps\x_{n-1}\x_0-\tfrac{1}{4}\x_0^2$ (because $\veps^2\x_{n-1}^2+\veps\x_{n-1}\x_0+\tfrac{1}{4}\x_0^2=(\veps\x_{n-1}+\tfrac{1}{2}\x_0)^2$), we have
$$
\{\x_0^2+\cdots+\x_{n-2}^2+\x_n^2\leq-\veps\x_{n-1}\x_0-\tfrac{1}{4}\x_0^2\}\subset\{\x_0^2+\cdots+\x_{n-2}^2+\x_n^2\leq\veps^2\x_{n-1}^2\}.
$$
Consequently, $X\subset\{\frac{5}{4}+\x_1^2+\cdots+\x_{n-2}^2+\x_n^2>-\veps\x_{n-1}\}$. Consider the polynomial diffeomorphism
$$
\phi:\R^n\to\R^n,\ x:=(x_1,\ldots,x_n)\mapsto(x_1,\ldots,x_{n-2},x_{n-1}+\tfrac{2}{\veps}(\tfrac{5}{4}+x_1^2+\cdots+x_{n-2}^2+x_n^2),x_n)
$$
whose inverse
$$
\phi^{-1}:\R^n\to\R^n,\ x:=(x_1,\ldots,x_n)\mapsto(x_1,\ldots,x_{n-2},x_{n-1}-\tfrac{2}{\veps}(\tfrac{5}{4}+x_1^2+\cdots+x_{n-2}^2+x_n^2),x_n)
$$
is also polynomial. We have 
$$
\phi(X)\subset\{\x_{n-1}\geq\tfrac{1}{\veps}(\tfrac{5}{4}+\x_1^2+\cdots+\x_{n-2}^2+\x_n^2)\}.
$$
As $\veps<1$, 
$$
\{\x_{n-1}\geq\tfrac{1}{\veps}(\tfrac{5}{4}+\x_1^2+\cdots+\x_{n-2}^2+\x_n^2)\}\subset\{-\x_{n-1}<\x_n<\x_{n-1}\},
$$
as required.
\end{proof}

\subsection{Proof of Theorem \ref{complement}}
We will represent $\R^n\setminus X$ as the image of the composition of finitely many polynomial maps $f_j:\R^n\to\R^n$ whose images contain constructible sets $\R^n\setminus Y_j$ such that $X=\bigcap_jY_j$. The proof is conducted in several steps:

\noindent{\em Step 1}. \em Initial preparation\em. By Lemma \ref{cod2} we assume $X\subset\Int(\pol)$ where $\pol:=\{-\x_{n-1}\le\x_n\le\x_{n-1}\}$. Denote the projection onto the first $n-1$ coordinates by 
$$
\pi:\R^n\to\R^{n-1},\ (x_1,\ldots,x_n)\mapsto(x_1,\ldots,x_{n-1}).
$$
For each vector $\vec{v}:=(v_1,\ldots,v_n)\in\R^n\setminus\{\x_n=0\}$ consider the isomorphism $\phi_{\vec{v}}$ that keeps fixed the plane $H_0:=\{\x_n=0\}$ and maps the vector $\vec{v}$ to the vector $\vec{\tt e}_n$ (see \eqref{iso}) and let $\pi_{\vec{v}}:=\pi\circ\phi_{\vec{v}}$ be the projection of $\R^n$ onto $\R^{n-1}$ (identified with $\{\x_n=0\}$) in the direction of $\vec{v}$ (see \eqref{proj1}). We have
$$
\phi_{\vec{v}}(\pol)=\{\x_{n-1}-(v_n-v_{n-1})\x_n\geq0,\x_{n-1}+(v_n+v_{n-1})\x_n\geq0\}.
$$
If $\lambda:=v_n-v_{n-1}>0$ and $\mu:=v_n+v_{n-1}>0$, then 
$$
\pi_{\vec{v}}(\pol\setminus\{\x_{n-1}=0,\x_n=0\})=\pi(\phi_{\vec{v}}(\pol)\setminus\{\x_{n-1}=0,\x_n=0\})=\{\x_{n-1}>0\}\subset\R^{n-1}. 
$$
Consider the open set $\Omega:=\{\x_n-\x_{n-1}>0,\x_n+\x_{n-1}>0\}\subset\R^n\setminus\{\x_n=0\}$. By Lemma \ref{recover} there exist vectors $\vec{v}_1,\dots,\vec{v}_r\in\Omega$ such that:
$$
X=\bigcap_{j=1}^r\pi_j^{-1}(\ol{\pi_j(X)}^{\zar})
$$
where $\pi_j:=\pi_{\vec{v}_j}$ for $j=1,\ldots,r$. In particular, each set $\pi_j^{-1}(\ol{\pi_j(X)}^{\zar})$ is an algebraic subset of $\R^n$ that contains $X$. For each $j=1,\ldots,r$ denote $\phi_j:=\phi_{\vec{v}_j}$. 

\noindent{\em Step 2}. We claim:\em There exists a polynomial map $f_1:\R^n\to\R^n$ such that
\begin{equation}\label{2}
\Ss_1:=f_1(\R^n\setminus\Int(\pol))=\R^n\setminus(\Int(\pol)\cap\pi_1^{-1}(\ol{\pi_1(X)}^{\zar}))\subset\R^n\setminus X.
\end{equation}
\em

Define $\pol_1:=\phi_1(\pol)$ and $X_1:=\phi_1(X)$. Write $\pol_1:=\{\x_{n-1}-\lambda_1\x_n\geq0,\x_{n-1}+\mu_1\x_n\geq0\}$ for some real numbers $\lambda_1,\mu_1>0$ and consider the polynomial map
$$
f_1':\R^n\to\R^n,\ x:=(x_1,\ldots,x_n)\mapsto (x_1,\ldots,x_{n-1},x_n(1-H_1(x_{n-1},x_n)^2G_1^2(x')))
$$
where $H_1:=(\x_{n-1}-\lambda_1\x_n)(\x_{n-1}+\mu_1\x_n)\in\R[\x_{n-1},\x_n]$ and $G_1\in\R[\x']:=\R[\x_1,\ldots,\x_{n-1}]$ is a polynomial equation of $\ol{\pi(X_1)}^{\zar}$. We claim: 
$$
f_1'(\R^n\setminus\Int(\pol_1))=\R^n\setminus(\Int(\pol_1)\cap(\pi^{-1}(\ol{\pi(X_1)}^{\zar}))).
$$

Pick a point $x':=(x_1,\ldots,x_{n-1})\in\R^{n-1}$ and let $\ell_{x'}:=\{(x',t):\ t\in\R\}$ be the line through $(x',0)$ parallel to $\vec{\tt e}_n$. Consider the polynomial $Q_{x'}(\x_n):=\x_n(1-H_1(x',\x_n)^2G_1^2(x'))\in\R[\x_n]$. We distinguish two cases:

\par$\bullet$ If $x'\not\in\ol{\pi(X_1)}^{\zar}$, then $Q_{x'}(\x_n)$ is a polynomial of degree five and negative leading coefficient. Let $a_{x'}\leq b_{x'}$ be real numbers such that 
$$
\ell_{x'}\setminus\Int(\pol_1)=\{x'\}\times((-\infty,a_{x'}]\cup[b_{x'},+\infty)).
$$
As $\lim_{x_n\to\pm\infty}Q_{x'}(x_n)=\mp\infty$, $Q_{x'}(a_{x'})=a_{x'}$ and $Q_{x'}(b_{x'})=b_{x'}$, we have 
\begin{align*}
[a_{x'},+\infty)&\subset Q_{x'}((-\infty,a_{x'}]),\\
(-\infty,b_{x'}]&\subset Q_{x'}([b_{x'},+\infty)).
\end{align*}
Thus, $\R=(-\infty,b_{x'}]\cup[a_{x'},+\infty)\subset Q_{x'}(\ell_{x'}\setminus\Int(\pol_1))\subset\R$, so $f_1'(\ell_{x'}\setminus\Int(\pol_1))=\ell_{x'}$.

\par$\bullet$ If $x'\in\ol{\pi(X_1)}^{\zar}$, then $Q_{x'}(\x_n)=\x_n$ and $f_1'(\ell_{x'}\setminus\Int(\pol_1))=\ell_{x'}\setminus\Int(\pol_1)$. 

Putting all together we deduce 
\begin{multline*}
f_1'(\R^n\setminus\Int(\pol_1))=\bigcup_{x'\in\R^{n-1}}f_1'(\ell_{x'}\setminus\Int(\pol_1))\\
=\bigcup_{x'\not\in\ol{\pi(X_1)}^{\zar}}\ell_{x'}\cup\bigcup_{x'\in\ol{\pi(X_1)}^{\zar}}(\ell_{x'}\setminus\Int(\pol_1))=\R^n\setminus(\Int(\pol_1)\cap(\pi^{-1}(\ol{\pi(X_1)}^{\zar}))).
\end{multline*}
As $X_1\subset\Int(\pol_1)\cap\pi^{-1}(\ol{\pi(X_1)}^{\zar})$, we have
\begin{equation}\label{1}
f_1'(\R^n\setminus\Int(\pol_1))=\R^n\setminus(\Int(\pol_1)\cap\pi^{-1}(\ol{\pi(X_1)}^{\zar}))\subset\R^n\setminus X_1.
\end{equation}

Recall that $\pi_1=\pi\circ\phi_1$ and define $f_1:=\phi_1^{-1}\circ f_1'\circ\phi_1$. If we apply $\phi_1^{-1}$ to \eqref{1}, we deduce that $f_1$ satisfy condition \eqref{2} above.

\noindent{\em Step 3}. We claim: \em There exists a polynomial map $f_2:\R^n\to\R^n$ such that
\begin{equation}\label{4}
\R^n\setminus(\Int(\pol)\cap\pi_1^{-1}(\ol{\pi_1(X)}^{\zar})\cap(\pi_2^{-1}(\ol{\pi_2(X)}^{\zar})))\subset\Ss_2:=f_2(\Ss_1)\subset\R^n\setminus X.
\end{equation}
\em

Define $\pol_2:=\phi_2(\pol)$ and $X_2:=\phi_2(X)$. Write $\pol_2:=\{\x_{n-1}-\lambda_2\x_n\geq0,\x_{n-1}+\mu_2\x_n\geq0\}$ for some real numbers $\lambda_2,\mu_2>0$ and consider
$$
f_2':\R^n\to\R^n,\ x:=(x_1,\ldots,x_n)\mapsto (x_1,\ldots,x_{n-1},x_n(1-H_2^2(x_{n-1},x_n)G_2^2(x'))),
$$
where $H_2:=(\x_{n-1}-\lambda_2\x_n)(\x_{n-1}+\mu_2\x_n)\in\R[\x_{n-1},\x_n]$ and $G_2\in\R[\x']$ is a polynomial equation of $\ol{\pi(X_2)}^{\zar}$. Let us check:
\begin{equation}\label{3}
\R^n\setminus(\Int(\pol_2)\cap\pi^{-1}(\ol{\pi(X_2)}^{\zar})\cap\phi_2(\pi_1^{-1}(\ol{\pi_1(X)}^{\zar})))\subset f_2'(\phi_2(\Ss_1))\subset\R^n\setminus X_2.
\end{equation}

Proceeding as in Step 1 for the vector $\vec{v}_1$ we have 
$$
f_2'(\R^n\setminus\Int(\pol_2))=\R^n\setminus(\Int(\pol_2)\cap\pi^{-1}(\ol{\pi(X_2)}^{\zar})).
$$
By \eqref{2} $\R^n\setminus(\Int(\pol)\cap\pi_1^{-1}(\ol{\pi_1(X)}^{\zar}))=\Ss_1$, so
$$
\R^n\setminus(\Int(\pol_2)\cap\phi_2(\pi_1^{-1}(\ol{\pi_1(X)}^{\zar})))=\phi_2(\Ss_1).
$$
As $\pi^{-1}(\ol{\pi(X_2)}^{\zar})=\{G_2=0\}$, we have $f_2'|_{\pi^{-1}(\ol{\pi(X_2)}^{\zar})}=\id_{\pi^{-1}(\ol{\pi(X_2)}^{\zar})}$. Thus, $f_2'|_{X_2}=\id_{X_2}$ and $\pi^{-1}(\ol{\pi(X_2)}^{\zar})\setminus(\Int(\pol_2)\cap\phi_2(\pi_1^{-1}(\ol{\pi_1(X)}^{\zar}))\subset f_2'(\phi_2(\Ss_1))$. Consequently,
\begin{multline*}
\R^n\setminus(\Int(\pol_2)\cap\pi^{-1}(\ol{\pi(X_2)}^{\zar})\cap\phi_2(\pi_1^{-1}(\ol{\pi_1(X)}^{\zar})))=(\R^n\setminus(\Int(\pol_2)\cap\pi^{-1}(\ol{\pi(X_2)}^{\zar})))\\
\cup(\pi^{-1}(\ol{\pi(X_2)}^{\zar})\setminus(\Int(\pol_2)\cap\phi_2(\pi_1^{-1}(\ol{\pi_1(X)}^{\zar})))\subset f_2'(\phi_2(\Ss_1)).
\end{multline*}
As $\Ss_1\subset\R^n\setminus X$, we have $\phi_2(\Ss_1)\subset\R^n\setminus X_2$, so $f_2'(\phi_2(\Ss_1))\subset f_2'(\R^n\setminus X_2)$. 

Let us check: $f_2'^{-1}(X_2)=X_2$. Once this is proved, $f_2'(\R^n\setminus X_2)\subset\R^n\setminus X_2$ and the remaining part of equation \eqref{3} holds. 

Pick $y:=(y',y_n)\in\R^n$ such that $f_2'(y)\in X_2$. Then $y'=\pi(f_2'(y))\in\pi(X_2)$, so $G_2(y')=0$ and $y=f_2'(y)\in X_2$. Thus, $f_2'^{-1}(X_2)\subset X_2$. As $f_2'|_{X_2}=\id_{X_2}$, we deduce $f_2'^{-1}(X_2)=X_2$.

Recall that $\pi_2=\pi\circ\phi_2$ and define $f_2:=\phi_2^{-1}\circ f_2'\circ\phi_2$. If we apply $\phi_2^{-1}$ to \eqref{3}, we deduce that $f_2$ satisfy condition \eqref{4} above.

\noindent{\em Step 4}. We proceed similarly with the remaining vectors $\vec{v}_j$ for $j=1,\ldots,r$ to obtain polynomial maps $f_j:\R^n\to\R^n$ such that
$$
\R^n\setminus\Big(\Int(\pol)\cap\bigcap_{i=1}^j\pi_i^{-1}(\ol{\pi_i(X)}^{\zar})\Big)\subset\Ss_j:=f_j(\Ss_{j-1})\subset\R^n\setminus X
$$
where $\Ss_0:=\R^n\setminus\Int(\pol)$.

If we write $X_j:=\phi_j(X)$ and $\pol_j:=\phi_j(\pol)=\{\x_{n-1}-\lambda_j\x_n\geq0,\x_{n-1}+\mu_j\x_n\geq0\}$ for some real numbers $\lambda_j,\mu_j>0$, the sought polynomial map $f_j$ is the composition $f_j:=\phi_j^{-1}\circ f_j'\circ\phi_j$ where
$$
f_j':\R^n\to\R^n,\ x:=(x_1,\ldots,x_n)\mapsto (x_1,\ldots,x_{n-1},x_n(1-H_j^2(x_{n-1},x_n)G_j^2(x'))),
$$
$H_j:=(\x_{n-1}-\lambda_j\x_n)(\x_{n-1}+\mu_j\x_n)\in\R[\x_{n-1},\x_n]$ and $G_j\in\R[\x']$ is a polynomial equation of $\ol{\pi(X_j)}^{\zar}$.

\noindent{\em Step 5}. {\em Conclusion}. For $j=r$ we have
$$
\R^n\setminus X=\R^n\setminus\Big(\Int(\pol)\cap\bigcap_{j=1}^r\pi_j^{-1}(\ol{\pi_j(X)}^{\zar})\Big)\subset\Ss_r\subset\R^n\setminus X.
$$
Thus, $\R^n\setminus X=\Ss_r=(f_r\circ\cdots\circ f_1)(\R^n\setminus\Int(\pol))$. By \cite[Thm.1.3]{fu4} the semialgebraic set $\R^n\setminus\Int(\pol)$ is a polynomial image of $\R^n$, so $\R^n\setminus X$ is a polynomial image of $\R^n$, as required.
\qed

\section{Resolution of the indeterminacy using double oriented blowings-up}\label{s3}

In this section we prove Lemma \ref{resol0}, that is: \em we can make regular each regulous function (or even each locally bounded rational function) on $\R^2$ after composing it with a suitable generically finite surjective regular map $\phi:\R^2\to\R^2$\em. It is know in general that a regulous function can be made regular after composition with a finite sequence of algebraic blowings-up, but of course the source space is no longer the plane. We prove in this section that we can achieve the desired generically finite surjective regular map via a finite composition of double oriented blowings-up. Let us begin with some examples.

\begin{examples}\label{ex}
(1) Consider the regulous function $f:\R^2\to\R$ given by $f(x,y):=\frac{x^3}{x^2+y^2}$. The polynomial map $\varphi:\R^2\to\R^2$ defined by $\varphi(u,v):=(u(u^2+v^2),v(u^2+v^2))$ is surjective, and $(f\circ\varphi)(u,v)=u^3$ is polynomial. Note that $\varphi$ is a polynomial homeomorphism.

(2) Consider the rational function on $\R^2$ given by $g(x,y):=\frac{x}{x^2+y^2}$ defined and continuous on $\R^2\setminus\{(0,0)\}$. The polynomial map $\phi:\R^2\to\R^2$ defined by $\phi(u,v):=(v^3(uv+1),v(uv-1))$ is surjective. 

Actually, if $\phi(u,v)=(a,b)\neq (0,0)$, then $v\neq 0$ and $uv=1+b/v$. Thus, it is enough to prove that there exists $v\in\R\setminus\{0\}$ satisfying $2v^3+bv^2-a=0$. This is true if $a\neq0$ because the previous equation in $v$ has odd degree whereas in case $a=0$ we take $v=-\frac{b}{2}$. 

In addition, the composition $(g\circ\phi)(u,v)=\frac{v(uv+1)}{v^4(uv+1)^2+(uv-1)^2}$ is regular. However, the surjective regular map $\phi$ is not proper because $\phi^{-1}((0,0))=\{v=0\}$.
\end{examples} 

\begin{remark}
The previous surjective regular map $\phi:\R^2\to\R^2$ has the form 
$$
(u,v)\mapsto (P^k(u,v)Q(u,v),P(u,v)R(u,v))
$$ 
where $k\geq1$ is an odd integer and $P,Q,R\in\R[{\tt u},{\tt v}]$ are polynomials such that $\{PQ=0\}\cap\{R=0\}=\varnothing$. The latter condition enables us to soften $g$ by making that the denominator has empty zero-set after increasing numerator's multiplicity. Another (more complicate!) possible surjective regular map such that $g\circ\phi$ is regular could be
$$
\phi:\R^2\to\R^2, (u,v)\mapsto ((u+v)^k(uv^2(u+v)-1),(u+v)(uv^2(u+v)+1).
$$
\end{remark}

A natural question that arise at this point is the following.
\begin{quest}
Which rational functions $f:=\frac{P}{Q}\in\R({\tt u},{\tt v})$ can be soften by means of a surjective regular map $\phi:\R^2\to\R^2$? 
\end{quest}

\begin{remark}
A necessary condition is $Z(Q)\subset Z(P)$ but is is not sufficient as one can check considering the rational function $f(x,y):=\frac{x^2+y^2}{x^4+y^4}$. If we compute the multiplicity of $f\circ\phi$ at any preimage of the origin under a surjective regular function $\phi:=(\phi_1,\phi_2):\R^2\to\R^2$, we observe that it is always negative (because the multiplicity of the denominator doubles the one of the numerator), so $f\circ\phi$ cannot be regular.
\end{remark}

If the following, we will pay a special attention to locally bounded rational functions, that is, rational functions on $\R^2$ that are locally bounded in a neighborhood of its indeterminacy points. As it happens with regulous functions, a locally bounded rational function on $\R^2$ admits only a finite number of poles. 

\begin{lem}\label{lem-bounded} Let $f:=\frac{P}{Q}$ be a locally bounded rational function on $\R^2$ where $P,Q\in\R[\x,\y]$ are relatively prime polynomials. Then the zero-set of $Q$ consists of finitely many points.
\end{lem}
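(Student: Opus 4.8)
The plan is to argue by contradiction, using the local boundedness of $f$ at its poles and reducing the statement to the classical fact that two coprime polynomials in two variables have only finitely many common zeros. We may assume $Q$ is non‑constant, since otherwise $\{Q=0\}=\varnothing$ and there is nothing to prove. The first step is to show that $\{Q=0\}\subseteq\{P=0\}$ as subsets of $\R^2$. Suppose there were a point $p$ with $Q(p)=0$ and $P(p)\neq0$. As $Q$ is a non‑zero polynomial the open set $\{Q\neq0\}$ is dense in $\R^2$, so one can pick a sequence $x_k\to p$ with $Q(x_k)\neq0$ for all $k$; then $|f(x_k)|=|P(x_k)|/|Q(x_k)|\to+\infty$, because the numerator tends to $|P(p)|\neq0$ while the denominator tends to $0$. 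Hence $f$ is unbounded on every neighbourhood of the pole $p$, contradicting the hypothesis that $f$ is locally bounded near its indeterminacy points. Thus every real zero of $Q$ is a zero of $P$.

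The second step is a dimension count. By the first step $\{Q=0\}\subseteq\{P=0\}\cap\{Q=0\}$ inside $\R^2$. Since $P$ and $Q$ are relatively prime in $\R[\x,\y]$ they remain relatively prime in $\C[\x,\y]$ (a greatest common divisor over $\C$ is unique up to a non‑zero scalar, hence may be chosen invariant under complex conjugation, i.e. real), so the complex plane curves $\{P=0\}$ and $\{Q=0\}$ in $\C^2$ have no common irreducible component; therefore $\{P=0\}\cap\{Q=0\}$ has complex dimension $\leq0$ and is finite. A fortiori $\{P=0\}\cap\{Q=0\}$ is finite in $\R^2$, and combined with the first step this shows that $\{Q=0\}$ is finite, as required. (One can also avoid complexifying: after a generic linear change of coordinates $P$ and $Q$ have positive $\y$‑degree and are coprime in $\R(\x)[\y]$, so $\operatorname{Res}_{\y}(P,Q)\in\R[\x]$ is a non‑zero polynomial vanishing at the $\x$‑coordinate of every common zero, leaving finitely many possibilities.)

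The argument is short, so there is no serious obstacle; the point deserving care is the use of the hypothesis in the first step, namely that any point of $\{Q=0\}$ at which $P$ does not vanish is a genuine indeterminacy point of $f$ and can be approached from inside the domain of $f$. One might a priori worry that $P$ could vanish to such high order along a one‑dimensional branch of $\{Q=0\}$ that $f$ stays bounded there; it is precisely the coprimality of $P$ and $Q$, exploited in the second step, that rules this out.
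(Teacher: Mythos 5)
Your proof is correct, and it takes a genuinely different route from the paper's. Both start from the observation that local boundedness forces $\{Q=0\}\subset\{P=0\}$ in $\R^2$ (you spell this out via a sequence approaching a would‑be pole; the paper states it as a one‑liner). After that the arguments diverge. The paper factors $Q$ into irreducibles $Q_1$ and invokes the criterion for principal real ideals (\cite[Thm.~4.5.1]{bcr}): since $\{Q_1=0\}\subset\{P=0\}$ but $Q_1\nmid P$, the ideal $(Q_1)$ cannot be real, and therefore $\{Q_1=0\}$ has dimension $\leq 0$; this is a real‑algebraic‑geometry argument whose byproduct is that every irreducible factor of $Q$ has a finite real zero set (equivalently, does not change sign on $\R^2$). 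You instead pass to $\C^2$: you note that $\{Q=0\}_{\R^2}\subset\{P=0\}_{\R^2}\cap\{Q=0\}_{\R^2}\subset\{P=0\}_{\C^2}\cap\{Q=0\}_{\C^2}$, and since coprimality of $P,Q$ in $\R[\x,\y]$ persists in $\C[\x,\y]$ (your conjugation argument for choosing a real gcd is correct), the complex intersection is finite by the classical fact that coprime curves in $\C^2$ share no component (or your resultant variant). Your route is more elementary — it avoids the real Nullstellensatz entirely and only uses standard commutative algebra — while the paper's route gives finer information about $Q$ itself. Both are valid proofs of the lemma.
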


\begin{proof}
As $f=\frac{P}{Q}$ is locally bounded and the polynomials $P,Q\in\R[\x,\y]$ are relatively prime, one has $\{Q=0\}\subset\{P=0\}$. Let $Q_1$ be an irreducible factor of $Q$. Using the criterion \cite[Thm.4.5.1]{bcr} for principal real ideals of $\R[\x,\y]$, we deduce that the ideal $(Q_1)\R[\x,\y]$ is not real. Otherwise, the ideal $\J(\{Q_1=0\})$ of all polynomials of $\R[\x,\y]$ vanishing identically on $\{Q_1=0\}$ is $(Q_1)\R[\x,\y]$, so $Q_1$ divides $P$ against the coprimality of $P$ and $Q$. Consequently, the zero-set of $Q_1$ has by \cite[Thm.4.5.1]{bcr} dimension $\leq 0$, that is, it is a finite set. Applying this to all the irreducible factors of $Q$, we conclude that $\{Q=0\}$ is a finite set, so $f$ has only finitely many poles, as required. 
\end{proof}

As we have already mentioned, regulous functions can be made regular after composition with a finite sequence of blowings-up along smooth algebraic centers \cite{fhmm}. Actually those rational functions on $\R^2$ that become regular after such compositions are exactly the locally bounded rational functions.

\begin{thm}\label{LocBoundBlowUp} Let $f$ be a rational function on $\R^2$. Then $f$ is locally bounded if and only if there exists a finite sequence $\sigma$ of blowings-up along points such that $f\circ \sigma$ is regular.
\end{thm}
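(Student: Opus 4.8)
\textbf{Proof proposal for Theorem \ref{LocBoundBlowUp}.}

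The plan is to prove the two implications separately, the forward direction ``regular after blowings-up $\Rightarrow$ locally bounded'' being the easy one and the converse being the substantial part. For the easy direction: suppose $f\circ\sigma$ is regular, where $\sigma:Z\to\R^2$ is a composition of point blowings-up. Then $\sigma$ is proper and surjective, and $f\circ\sigma$ is continuous (being regular on a smooth surface), hence bounded on any compact set $K\subset\R^2$ because $f\circ\sigma$ is bounded on the compact set $\sigma^{-1}(K)$. Since $f$ agrees with $f\circ\sigma$ composed with a local section of $\sigma$ away from a measure-zero set, and more precisely $f(x)=(f\circ\sigma)(z)$ for any $z\in\sigma^{-1}(x)$ with $x$ outside the (finite) indeterminacy locus, local boundedness of $f$ near each point follows at once.

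For the converse I would argue by induction on a suitable complexity invariant of the rational function, using resolution of indeterminacy in dimension two. First, by Lemma \ref{lem-bounded} the pole set $\{Q=0\}$ of $f=P/Q$ (with $P,Q$ coprime) is a finite set of points; so it suffices to work locally and resolve the indeterminacy point by point, then patch by composing the finitely many local resolutions (each point blowing-up being a global map $\R^2\to\R^2$ after recentring, or more cleanly one takes the blowing-up of $\R^2$ at a finite set of points). Fix an indeterminacy point, say the origin. The key classical fact is that a finite sequence of point blowings-up over the origin turns the pencil $\{P=\lambda Q\}_{\lambda}$ into a pencil with no base point, i.e. resolves the map $(P:Q):\R^2\dashrightarrow\PP^1$ to a genuine regular map $Z\to\PP^1$; this is the Zariski/Newton-polygon style resolution of a rational map from a surface to a curve, and it holds over $\R$ just as over $\C$ for the complexification and then descends. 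Once $(P\circ\sigma:Q\circ\sigma):Z\to\PP^1$ is a morphism, the image of the exceptional locus is a connected (in fact compact, being a continuous image of a compact set) subset of $\PP^1(\R)$.

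The point where local boundedness of $f$ finally gets used — and this is the heart of the argument — is to show that this resolved morphism $Z\to\PP^1(\R)$ actually misses the point at infinity $\infty=(1:0)$, so that it lands in $\R\subset\PP^1(\R)$ and hence $f\circ\sigma=(P\circ\sigma)/(Q\circ\sigma)$ is a regular (real-valued, everywhere-defined) function on $Z$. Indeed, if the morphism hit $\infty$ at some point $z_0$ of the exceptional divisor, then choosing a real analytic (or Nash) arc $\gamma(t)$ in $Z$ with $\gamma(0)=z_0$ and $(P\circ\sigma:Q\circ\sigma)(\gamma(t))\to\infty$ in $\PP^1(\R)$, the composite arc $\sigma\circ\gamma$ would be an arc in $\R^2$ tending to the origin along which $|f|=|P/Q|\to+\infty$, contradicting local boundedness of $f$ at the origin. (One should note $\sigma\circ\gamma$ is non-constant near $t=0$ since $\sigma$ is an isomorphism off the exceptional divisor and a generic arc through $z_0$ meets that divisor only at $t=0$.) Thus the resolved morphism takes values in the affine line, $f\circ\sigma$ is regular on $Z$, and composing the finitely many point-resolutions at the finitely many poles of $f$ yields the required finite sequence $\sigma$ of blowings-up along points with $f\circ\sigma$ regular.

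I expect the main obstacle to be the bookkeeping in the converse: making precise the ``resolve the pencil to base-point-free status by finitely many point blowings-up'' step over $\R$ (one wants to invoke it for the complexification and check it descends, or cite the standard two-dimensional statement), and ensuring that after resolving one pole the remaining poles are unaffected so that the finitely many local resolutions compose into one global map $\R^2\to\R^2$ (equivalently $Z\to\R^2$). The arc-lifting contradiction that exploits local boundedness is conceptually the crux but technically short; the surface-resolution input is where one must be careful, though it is entirely standard.
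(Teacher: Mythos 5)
Your forward direction is the paper's argument almost verbatim (properness of $\sigma$, continuity and hence boundedness of $f\circ\sigma$ on $\sigma^{-1}(K)$, pushed down to $f$ off the finite indeterminacy locus); that part is fine.

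For the converse you take a genuinely different route. The paper's sketch applies \emph{embedded} resolution to the curve $\{PQ=0\}$: after finitely many point blowings-up the numerator and denominator become simultaneous normal crossings, so in local coordinates at any real point of $M$ the lift $f\circ\sigma$ is a unit times a monomial quotient, and local boundedness of $f\circ\sigma$ (inherited from $f$ by properness) forces the exponents to be non-negative, giving regularity directly. You instead resolve the rational map $(P:Q)$ to a morphism $Z\to\PP^1$ and then use an arc-lifting contradiction to show the morphism misses $\infty$ over $\R$. Both work, and interestingly your arc argument is very close in spirit to the paper's Proposition~\ref{aa} and Theorem~\ref{SurjectiveArcLift} rather than to their proof of Theorem~\ref{LocBoundBlowUp}; what the paper's route buys is that no arc argument is needed once one has normal crossings, while your route buys that only base-point-freeness of the pencil over $\R$ is needed, a weaker resolution statement.

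Two points you should fix. First, the parenthetical ``each point blowing-up being a global map $\R^2\to\R^2$ after recentring'' is wrong: the blowing-up of $\R^2$ at a real point is a map $M\to\R^2$ where $M$ is a M\"obius band, not a copy of $\R^2$. (This is precisely the motivation for the \emph{double oriented} blowing-up $\R^2\to\R^2$ elsewhere in the paper.) This does not break your proof since the theorem only asserts a sequence of blowings-up $\sigma:Z\to\R^2$, but the remark would mislead a reader. Second, for the pencil-resolution step you should not argue by ``resolving over $\C$ and descending'': the complex base points of the pencil may be non-real, and blowing up a conjugate pair of non-real points does not change the real locus, so such blowings-up are useless here. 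Argue instead directly over $\R$: blow up at the (finitely many, by Lemma~\ref{lem-bounded}) real base points; the local intersection multiplicity $\mu_p(P,Q)$ at real points strictly drops after each blowing-up (the standard two-dimensional computation), so after finitely many steps there are no real base points, and then $(P\circ\sigma:Q\circ\sigma)$ is a morphism on the real locus, which is all you need for your arc argument.
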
 
\begin{proof}[Sketch of proof]
Assume first that such a sequence exists. Then the preimage under $\sigma$ of a compact Euclidean neighborhood of an indeterminacy point of $f$ is compact by properness of $\sigma$. Thus, the continuous function $f\circ\sigma$ is bounded on such preimage. As a consequence $f$ is locally bounded at the corresponding indeterminacy point. The converse implication can be proved as \cite[Thm.3.11]{fhmm}. Let us provide an idea on how the proof works. By means of a finite sequence $\sigma:M\to\R^2$ of blowings-up at points, it is possible to make normal crossings the numerator and denominator of $f$. If we choose a suitable local system of coordinates at a point $p\in M$, the rational function $f\circ \sigma$ is equal in such a neighborhood of $p$ to a unit times a quotient of products of the variables (with exponents). As $f\circ \sigma$ is also locally bounded, the denominator must divide the numerator, so $f\circ \sigma$ is regular at $p$, as required.
\end{proof}

\subsection{Double oriented blowings-up}

In the following we restrict our target to some particular type of surjective regular maps: \em compositions of finitely many double oriented blowings-up\em. The oriented blowing-up $\pi^+$ of $\R^2$ at the origin corresponds to the passage to polar coordinates and it can be achieved using the analytic map 
$$
\pi^+:[0,+\infty)\times\sph^1\to\C\equiv\R^2,\ (\rho,v)\mapsto\rho v.
$$
The previous map provides a Nash diffeomorphism between the cylinder $\Cc:=(0,+\infty)\times\sph^1$ and the punctured plane $\Pp:=\R^2\setminus\{(0,0)\}$ whose inverse is $(\pi^+|_\Cc)^{-1}:\Pp\to\Ss,\ u\mapsto(\|u\|,\frac{u}{\|u\|})$. The inverse image of the origin under $\pi^+$ is the circle $\{0\}\times\sph^1$. We choose a rational parameterization of $\sph^1$ (consider for instance the stereographic projection from the North pole) and we obtain the map 
$$
\pi_0:[0,+\infty)\times\R\to\R^2, (\rho,t)\mapsto\Big(\rho\frac{2t}{t^2+1},\rho\frac{t^2-1}{t^2+1}\Big).
$$
The image of this map is $\R^2\setminus\{(0,t):\ t>0\}$. We consider the regular extension $\pi$ of $\pi_0$ to $\R^2$, whose image is $\R^2$ because $\pi(\rho,0)=(0,-\rho)$ for each $\rho\in\R$.

\begin{define}\label{obup} 
The surjective regular map 
$$
\pi:\R^2\to\R^2,\ (\rho,t)\mapsto\Big(\rho\frac{2t}{t^2+1},\rho\frac{t^2-1}{t^2+1}\Big)
$$
is called the \em double oriented blowing-up of $\R^2$ at the origin\em. The double oriented blowing-up of $\R^2$ at an arbitrary point $p\in\R^2$ is the composition $\tau_{\overrightarrow{0p}}\circ\pi\circ\tau_{-\overrightarrow{0p}}$ where $\tau_{\vec{v}}$ denotes the translation $\tau_{\vec{v}}:\R^2\to\R^2,\ x\mapsto x+\vec{v}$ for each $\vec{v}\in\R^2$. Let $\Pi:\C^2\setminus\{t^2+1=0\}\to\C^2$ be the natural rational extension of $\pi$ to $\C^2\setminus\{t^2+1=0\}$.
\end{define}
\begin{lem}
The double oriented blowing-up of $\R^2$ at the origin $\pi:\R^2\to\R^2$ satisfies the following properties:
\begin{itemize}
\item[(i)] $\pi$ is a surjective regular map.
\item[(ii)] $\pi^{-1}((0,0))=\{\rho=0\}$ is the $t$-axis.
\item[(iii)] $\pi|_{\{t=0\}}:\{t=0\}\to\{x=0\},\ (\rho,0)\to (0,-\rho)$ provides a bijection between the $\rho$-axis and the $y$-axis.
\item[(iv)] The determinant of the Jacobian matrix of $\pi$ at the point $(\rho,t)$ values $\frac{2\rho}{t^2+1}$.
\item[(iv)] Both $\pi|_{\R^2\setminus\{\rho=0\}}$ and $\Pi|_{\C^2\setminus\{(t^2+1)\rho=0\}}$ are local diffeomorphisms. 
\item[(v)] Both restrictions $\pi|_{\R^2\setminus\{\rho t=0\}}:\R^2\setminus\{\rho t=0\}\to\R^2\setminus\{x=0\}$ and 
$$
\Pi|:\C^2\setminus\{\rho t(t^2+1)=0\}\to\C^2\setminus\{x(x^2+y^2)=0\}
$$ 
are double covers. In fact, $\pi(-\rho,-\frac{1}{t})=\pi(\rho,t)$ and $\Pi(-\rho,-\frac{1}{t})=\Pi(\rho,t)$.
\end{itemize}
\end{lem}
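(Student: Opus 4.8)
The plan is to deduce all six items from the single algebraic identity $4t^2+(t^2-1)^2=(t^2+1)^2$; everything except the covering statement (v) is then a routine verification.

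For (i), since $t^2+1>0$ on $\R^2$ both components of $\pi$ are regular functions, and surjectivity is exactly the remark made just before Definition \ref{obup}: for $\rho\ge 0$ the factor $\rho$ scales the circle $\sph^1$ while the stereographic parametrization covers $\sph^1$ minus its north pole, so the image of $\{\rho\ge 0\}$ is $\R^2\setminus\{(0,t):t>0\}$, and $\pi(\rho,0)=(0,-\rho)$ with $\rho<0$ recovers the missing open ray. The identity gives $\pi_1(\rho,t)^2+\pi_2(\rho,t)^2=\rho^2$, whence (ii), and (iii) is the one-line computation $\pi(\rho,0)=(0,-\rho)$. For (iv) I would write out the four partial derivatives of $\pi$ and use the identity once more to collapse the Jacobian determinant to $\tfrac{2\rho}{t^2+1}$.

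The two local-diffeomorphism claims then follow from the inverse function theorem, since the real Jacobian $\tfrac{2\rho}{t^2+1}$ vanishes exactly on $\{\rho=0\}$, while its holomorphic continuation is the same expression (defined wherever $t^2+1\neq 0$) and hence a nonvanishing holomorphic function precisely on $\C^2\setminus\{\rho(t^2+1)=0\}$. For (v) the plan is: first check by substitution that $\iota(\rho,t):=(-\rho,-\tfrac1t)$ satisfies $\pi\circ\iota=\pi$ and $\Pi\circ\iota=\Pi$ and is a free involution of $\R^2\setminus\{\rho t=0\}$ (respectively of $\C^2\setminus\{\rho t(t^2+1)=0\}$, where deleting $t^2+1=0$ also removes the potential fixed points $(0,\pm i)$). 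Next, describe the fibres: on these domains $x=\pi_1\neq 0$ and $x^2+y^2=\rho^2\neq 0$, so $\pi$ (respectively $\Pi$) maps into $\R^2\setminus\{x=0\}$ (respectively $\C^2\setminus\{x(x^2+y^2)=0\}$); conversely, over such a point $\rho$ ranges over the two square roots of $x^2+y^2\neq 0$, and for each the point $(x/\rho,y/\rho)$ lies on $\sph^1$ (respectively on $\{X^2+Y^2=1\}$) away from the two points with $X=0$, hence corresponds under stereographic parametrization to a unique admissible value of $t$; thus every fibre has exactly two points, swapped by $\iota$. Finally, a proper surjective local diffeomorphism onto a connected manifold is a covering, and properness of $\pi|$ holds because any sequence in $\R^2\setminus\{\rho t=0\}$ with no subsequential limit there has image with no subsequential limit in $\R^2\setminus\{x=0\}$ (escaping in $|\rho|$ sends the image to infinity; escaping in $|t|$, or converging to $\{\rho t=0\}$, sends the image into $\{x=0\}$); restricting to the components $\{x>0\}$ and $\{x<0\}$ of the target yields the degree-two covering, and the analogous argument shows $\Pi|$ is a finite \'etale morphism of degree $2$, being quasi-finite, a local biholomorphism, and the quotient by the free $\iota$-action.

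I expect (v) to be the only real obstacle, the delicate point being to keep precise track of which loci must be removed so that $\iota$ is free, $\Pi$ is defined, and the fibre count is exactly $2$, and then to package the covering (respectively \'etale) conclusion uniformly for both maps; items (i)--(iv) are just bookkeeping around $4t^2+(t^2-1)^2=(t^2+1)^2$.
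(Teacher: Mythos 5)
The paper states this lemma without proof, treating the verifications as routine, so there is no proof in the text to compare against. Your argument is correct and fills the gap completely. Items (i)--(iv) are, as you say, all bookkeeping around the identity $4t^2+(t^2-1)^2=(t^2+1)^2$: it gives $\pi_1^2+\pi_2^2=\rho^2$, whence (ii); it collapses the four partial derivatives to the stated Jacobian determinant $\frac{2\rho}{t^2+1}$; and the two local-diffeomorphism claims then follow from the real and holomorphic inverse function theorems on the loci where this determinant is nonzero and defined. For (v), your explicit fibre description is the heart of the matter and is right: $\rho$ ranges over the two square roots of $x^2+y^2\neq 0$, and for each of them $(x/\rho,y/\rho)$ lies on the (real or complex) circle away from $(0,\pm 1)$, hence corresponds to a unique $t$ with $t\neq 0$ and $t^2+1\neq 0$ under inverse stereographic projection $t=\frac{X}{1-Y}$; the involution $\iota(\rho,t)=(-\rho,-1/t)$ swaps the two preimages and is free on the stated loci.

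One remark on economy: the separate properness argument is not needed. Once you know the map is a local (bi)holomorphism and every fibre has exactly $2$ points, the covering conclusion over each connected component of the target follows from the standard fact that a local homeomorphism with constant finite fibre cardinality over a connected base is a covering (given the fibre $\{x_1,x_2\}$ over $y_0$, shrink disjoint slice neighborhoods $U_1,U_2$ until they map onto a common $V\ni y_0$; any extra point of $f^{-1}(V)$ would produce a fibre of cardinality $\geq 3$). For the real case the two target components $\{x>0\}$ and $\{x<0\}$ are simply connected, so the double cover is in fact trivial. For the complex case the target $\C^2\setminus\{x(x^2+y^2)=0\}$ is connected because it is the complement of a proper algebraic hypersurface in $\C^2$. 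Your properness sketch for the real case is also correct in spirit, but the phrasing ``escaping in $|t|$, or converging to $\{\rho t=0\}$, sends the image into $\{x=0\}$'' should really be organized as a dichotomy on $|\rho_n|$: if $|\rho_n|\to\infty$ then $x_n^2+y_n^2=\rho_n^2\to\infty$; otherwise along a bounded subsequence of $\rho_n$ either $|t_n|\to\infty$ or $(\rho_n,t_n)$ accumulates on $\{\rho t=0\}$, and in both cases $x_n\to 0$.
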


We can relate the double oriented blowing-up to the classical blowing-up as follows. Denote $\sigma: M\to\R^2$ the blowing-up of $\R^2$ at the origin. We describe $M$ as the subset of $\R^2\times\PP^1$ given by the equation
$$
M:=\{((x,y),[u:v])\in\R^2\times \PP^1:~~xv=yu\}
$$
whereas $\sigma:M\to\R^2,\ ((x,y),[u:v])\mapsto(x,y)$.

\begin{lem}\label{Lem-bup} 
The map 
$$
\psi:\R^2\to M,\ (\rho,t)\mapsto\Big(\Big(\rho\frac{2t}{t^2+1},\rho\frac{t^2-1}{t^2+1}\Big),[2t:t^2-1]\Big)
$$
satisfies the following properties:
\begin{itemize}
\item[(i)] It is a surjective regular map. 
\item[(ii)] The restriction $\psi|_{\{t=0\}}:\{t=0\}\to\Ll:=\{((0,-\rho),[0:1]):\ \rho\in\R\}\subset M$ is bijective.
\item[(iii)] The restriction $\psi|_{\R^2\setminus\{t=0\}}:\R^2\setminus\{t=0\}\to M\setminus\Ll$ is a double cover and it holds $\psi(-\rho,-\frac{1}{t})=\psi(\rho,t)$.
\item[(iv)] $\pi=\sigma\circ\psi$.
\end{itemize}
\end{lem}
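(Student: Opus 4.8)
The plan is to verify the four assertions by a direct computation, using the affine chart $\{u\neq 0\}$ of $\PP^1$, which identifies $M\setminus\Ll$ with a copy of $\R^2$ and makes everything transparent. First I would record the two elementary facts. For regularity (part of (i)): the first two components of $\psi$ are regular because $t^2+1$ has no real zero, and $(\rho,t)\mapsto[2t:t^2-1]\in\PP^1$ is regular because $2t$ and $t^2-1$ have no common real root; since the defining relation $xv=yu$ of $M$ is satisfied identically by $\psi(\rho,t)$, the map $\psi:\R^2\to M$ is regular. Assertion (iv) is immediate from the definitions: $\sigma(\psi(\rho,t))=\big(\rho\tfrac{2t}{t^2+1},\rho\tfrac{t^2-1}{t^2+1}\big)=\pi(\rho,t)$.

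Next I would analyse $\Ll$ and the locus $\{t=0\}$. A point of $M$ lies in $\Ll$ exactly when its $\PP^1$-coordinate is $[0:1]$, equivalently $u=0$ (and then automatically $x=0$ from $xv=yu$); since $\psi(\rho,t)$ has $\PP^1$-coordinate $[2t:t^2-1]$, this forces $t=0$. Hence $\psi^{-1}(\Ll)=\{t=0\}$ and $\psi^{-1}(M\setminus\Ll)=\R^2\setminus\{t=0\}$. On $\{t=0\}$ one gets $\psi(\rho,0)=((0,-\rho),[0:1])$, and $\rho\mapsto((0,-\rho),[0:1])$ is visibly a bijection onto $\Ll$, which proves (ii).

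For (iii) I would pass to the chart $u\neq 0$: setting $u=1$, the relation $xv=yu$ becomes $y=xv$, so $(x,v)\mapsto((x,xv),[1:v])$ identifies $M\setminus\Ll$ with $\R^2$. In this chart $\psi|_{\R^2\setminus\{t=0\}}$ reads $(\rho,t)\mapsto\big(\tfrac{2\rho t}{t^2+1},\tfrac{t^2-1}{2t}\big)$, and a short computation gives $\det\Jac\psi=\tfrac{1}{t}\neq 0$, so $\psi$ is a local diffeomorphism there. Given $(x,v)$, the equation $\tfrac{t^2-1}{2t}=v$ is the quadratic $t^2-2vt-1=0$, whose two real roots have product $-1$, hence are both nonzero and of the form $t$ and $-\tfrac1t$; for each such $t$ the value $\rho=\tfrac{x(t^2+1)}{2t}$ is then forced. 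Thus every point of $M\setminus\Ll$ has exactly two preimages, and one checks directly that these are interchanged by the fixed-point-free involution $\iota:(\rho,t)\mapsto(-\rho,-\tfrac1t)$, for which the displayed identity $\psi\circ\iota=\psi$ holds (this can also be verified componentwise from the formula). Consequently $\psi|_{\R^2\setminus\{t=0\}}$ realizes $M\setminus\Ll$ as the quotient $(\R^2\setminus\{t=0\})/\iota$, i.e.\ a regular double cover. Finally, surjectivity of $\psi$ in (i) follows by combining (ii) with the surjectivity onto $M\setminus\Ll$ just obtained (every $(x,v)$ is hit).

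The computations are all routine. The only point requiring a little care is the global structure in (iii): checking that the fibre count is exactly two over \emph{every} point of $M\setminus\Ll$, including the points lying over the origin of $\R^2$ (the exceptional divisor minus $\Ll$), and correctly identifying the deck transformation $\iota$. Working in the single chart $u\neq 0$ makes this uniform and avoids splitting into cases according to whether the base point is the origin, lies on the $y$-axis, or is generic.
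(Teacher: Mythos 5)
Your proof is correct and complete; the paper states Lemma~\ref{Lem-bup} without supplying a proof, so there is no argument in the text to compare against. Your verification of (i), (ii) and (iv) is routine and right, and the handling of (iii) via the chart $u\neq 0$ is clean: the Jacobian computation $\det\Jac\psi=1/t$ gives the local diffeomorphism, the quadratic $t^2-2vt-1=0$ with root product $-1$ gives exactly two (nonzero) preimages over every point including those on the exceptional divisor, and exhibiting the fixed-point-free involution $\iota(\rho,t)=(-\rho,-1/t)$ as the deck transformation shows $\psi$ factors through the free $\Z/2$-quotient, which is precisely what makes it a genuine double cover rather than merely a $2$-to-$1$ local diffeomorphism.
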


\begin{example} 
Consider the locally bounded rational function on $\R^2$ given by the formula $f(x,y):=\frac{x^2}{x^2+y^2}$. Then $(f\circ\pi)(\rho,t)=\frac{4t^2}{4t^2+(t^2-1)^2}$ is regular.
\end{example}

\subsection{Multiplicity of an affine complex curve at a point}
For the sake of completeness we recall how one can compute the multiplicity at a point of a polynomial equation
of a planar curve. 

In the following we use the letter $\omega$ to denote the order of a power series. Let $Q\in\C[\x,\y]$ be a non-constant polynomial and denote $\Cc:=\{Q=0\}\subset\C^2$. Let $p\in\Cc$ be a point of $\Cc$ and assume after a translation that $p$ is the origin. Let $\Gamma_1,\ldots,\Gamma_r$ be the complex branches of $\Cc$ at the origin and let $\gamma_i:=(\gamma_{i,1},\gamma_{i,2})\in\C\{{\tt s}\}^2$ be a primitive parameterization of $\Gamma_i$. We mean with primitive parameterization of $\Gamma_i$ a couple of convergent power series $\gamma_{i,1},\gamma_{i,2}\in\C\{{\tt s}\}$ such that:
\begin{itemize} 
\item both series do not belong simultaneously to the ring $\C\{{\tt s}^k\}$ for any $k\geq 2$,
\item $\Gamma_i$ is the germ at the origin of the set $\{(\gamma_{i,1}(s),\gamma_{i,2}(s)):\ |s|<\veps\}$ for some $\veps>0$ small enough.
\end{itemize}
To compute the multiplicity $\mult_0(Q)$ of $Q$ at the origin, write $Q$ as the sum of its homogeneous components $Q=Q_m+Q_{m+1}+\cdots+Q_d$ where each $Q_k$ is either zero or a homogeneous polynomial of degree $k$ and $Q_m\neq0$. Then $\mult_0(Q)=m$. 

It is possible to express $\mult_0(Q)$ in terms of some invariant associated to the complex branches $\Gamma_1,\ldots,\Gamma_r$ of $\Cc$ at the origin. Write $Q:=P_1^{e_1}\cdots P_s^{e_s}$ where each $P_i\in\C[\x,\y]$ is an irreducible polynomial and $e_i\geq1$. As $\mult_0(Q)=e_1\mult_0(P_1)+\cdots+e_s\mult_0(P_s)$, it is enough to express $\mult_0(P_i)$ in terms of the complex branches of $\Cc$ that correspond to the factor $P_i$. Thus, we assume in the following that $Q$ is an irreducible polynomial.

After a linear change of coordinates, we may assume that $Q$ is a regular series with respect to $\y$ of order $\mult_0(Q)$, that is, $\omega(Q(0,\y))=\mult_0(Q)=m$. By Weierstrass' Preparation Theorem $Q=Q^*U$, where $Q^*\in\C\{\x\}[\y]$ is a distinguished polynomial of degree $m$ and $U\in\C\{\x,\y\}$ is a unit, that is, $U(0,0)\neq0$. Let $Q_1^*,\ldots,Q_\ell^*\in\C\{\x\}[\y]$ be the irreducible factors of $Q^*$ in $\C\{\x\}[\y]$, which are distinguished polynomials with respect to $\y$ such that $\deg_\y(Q_i^*)=\omega(Q_i^*)$. As $Q\in\C[\x,\y]$ is irreducible, it has no multiple factor in $\C\{\x\}[\y]$. Thus, $Q^*=Q_1^*\cdots Q_\ell^*$ and 
$$
\mult_0(Q)=\mult_0(Q^*)+\mult_0(U)=\mult_0(Q^*)=\sum_{j=1}^\ell\mult_0(Q_j^*).
$$
Consequently, it is enough to compute $\mult_0(P)$ for an irreducible distinguished polynomial $P\in\C\{\x\}[\y]$ with $\deg_\y(P)=\omega(P)$. Let $\xi\in\C\{\x^*\}$ be a Puiseux root of $P$. It holds that $\deg_\y(P)$ coincides with the smallest $q\geq 1$ such that $\xi:=\beta(\x^{1/q})\in\C\{\x^{1/q}\}$ for some $\beta\in\C\{\s\}$. In addition, the polynomial $P$ is associated with exactly one complex branch $\Gamma$ for which any primitive parameterization $\gamma:=(\gamma_1,\gamma_2)$ satisfies $q=\min\{\omega(\gamma_1),\omega(\gamma_2)\}$. 

Thus, if we define $\mult(\Gamma)$ as the minimum order of the components of a primitive 
parameterization of $\Gamma$, we have $\mult_0(P)=q=\mult(\Gamma)$.

\subsection{Alternative resolution of indeterminacy}
We know that regulous functions become regular after a finite composition of blowings-up (and this is even true for locally bounded functions as claimed in Theorem \ref{LocBoundBlowUp}). However the source space of the regular function obtained is no longer the same as that of the regulous function we started with. The following result, which is proved below, is an improvement of Lemma \ref{resol0} from which we deduce it.

\begin{thm}\label{resol} 
Let $f$ be a locally bounded rational function on $\R^2$. Then there exists a finite composition $\phi:\R^2\to\R^2$ of finitely many double oriented blowings-up and polynomial isomorphisms such that $f\circ\phi$ is a regular function on $\R^2$.
\end{thm}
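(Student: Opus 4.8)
The plan is to transport a classical resolution of the indeterminacy of $f$ by point blowings-up along the surjective regular map $\psi$ of Lemma~\ref{Lem-bup}, using the factorization $\pi=\sigma\circ\psi$ to replace, one blowing-up at a time, each classical blowing-up at a point by a double oriented blowing-up of $\R^2$ at that point. First I would record the ingredients. By Lemma~\ref{lem-bounded} the pole set of $f$ is a finite set $\{p_1,\dots,p_k\}$, and by Theorem~\ref{LocBoundBlowUp} (applied locally at each pole) there is, for every locally bounded rational function $g$ on a nonsingular real surface and every pole $p$ of $g$, a least number $c(g,p)\geq1$ of point blowings-up, the first centered at $p$, after which $g$ becomes regular on a neighbourhood of the total preimage of $p$. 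Set $N(g):=\max\{c(g,p):p\text{ pole of }g\}$, with $N(g)=0$ when $g$ is regular. Two elementary facts are used: (a) if $g$ is locally bounded rational on $\R^2$ and $\phi:\R^2\to\R^2$ is regular, then $g\circ\phi$ is again locally bounded rational on $\R^2$ (continuity of $\phi$ plus compactness); (b) a direct inspection of the formulas in Definition~\ref{obup} and Lemma~\ref{Lem-bup} shows that $\psi$ is a local diffeomorphism at \emph{every} point of $\R^2$ (the extra $\PP^1$-factor repairs the vanishing of the Jacobian of $\pi$ along $\{\rho=0\}$), and that $\pi$ has one-point fibres over every point of $\{\x=0\}\setminus\{(0,0)\}$; hence, after composing with translations, the double oriented blowing-up at a point $p\in\R^2$ has one-point fibres over the vertical line through $p$ minus $\{p\}$.

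The heart of the proof is then an inductive step. Let $g$ be locally bounded rational on $\R^2$, not regular, with poles $p_1,\dots,p_k$, put $N:=N(g)\geq1$, and let $m$ be the number of indices with $c(g,p_i)=N$. After a polynomial automorphism of $\R^2$ (a generic rotation, which separates the second coordinates of the poles, followed by a horizontal shear $(\x,\y)\mapsto(\x-h(\y),\y)$ with $h\in\R[\y]$ interpolating suitable values) I may assume that the $m$ poles of maximal complexity all lie on a common vertical line. Choose one of them, say $p$, and let $\pi$ be the double oriented blowing-up of $\R^2$ at $p$; write $\pi=\sigma_1\circ\psi_1$ as in Lemma~\ref{Lem-bup}, with $\sigma_1$ the classical blowing-up at $p$ and exceptional divisor $E$. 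By (a) the rational function $g\circ\pi=(g\circ\sigma_1)\circ\psi_1$ is locally bounded on $\R^2$, and its poles sit over the poles of $g\circ\sigma_1$: the finitely many points of $E$ over which $g\circ\sigma_1$ is not regular, each of complexity $\leq N-1$ since one blowing-up has already been performed, together with the $p_i$ with $i\geq2$, which lie outside $E$ where $\sigma_1$ is an isomorphism. Because $\psi_1$ is a local diffeomorphism, the complexity of a pole of $g\circ\pi$ does not exceed the complexity of the pole of $g\circ\sigma_1$ below it; and because each $p_i$ with $i\geq2$ lies on the vertical line through $p$, it has exactly one $\pi$-preimage by (b). Therefore $N(g\circ\pi)\leq N$, and if equality holds the number of maximal-complexity poles has dropped from $m$ to at most $m-1$. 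Thus the lexicographic pair $(N(g),m)$ strictly decreases under one double oriented blowing-up (preceded by a polynomial automorphism), and after finitely many such steps one reaches a regular function. Composing all the double oriented blowings-up and polynomial isomorphisms used along the way produces the required $\phi$ with $f\circ\phi$ regular.

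The delicate point, and the main obstacle, is exactly this termination bookkeeping. Since $\psi$ is generically two-to-one, a careless choice of centers makes the number of poles — and hence of double oriented blowings-up still needed — grow, so one must (i) pre-align the poles of maximal complexity onto a line over which the relevant double oriented blowing-up is injective, so that those poles are not duplicated, and (ii) know precisely which points of the blown-up surface are poles after the first blowing-up and that their complexity has genuinely decreased, i.e. that a single classical blowing-up already makes progress in the sense of $c(\cdot,\cdot)$. Both points rest on fact (b) and on the local structure of the resolution underlying Theorem~\ref{LocBoundBlowUp}; I expect verifying (b) and carrying out the alignment carefully to be the most technical part.

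A more conceptual packaging of the same argument — the one the section actually develops — is to phrase everything in terms of surjective regular maps enjoying the arc lifting property (Theorem~\ref{SurjectiveArcLift}): double oriented blowings-up have this property by Lemma~\ref{Lem-bup}, it is stable under composition, and it is precisely what is needed to push the resolution of $f\circ\sigma$ on the abstract modification $M$ back down to $\R^2$, bypassing the explicit tracking of poles.
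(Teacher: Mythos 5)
Your proposal is correct in outline, but it takes a genuinely different route from the paper, and the final remark misdescribes what the section actually does.

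The paper's proof of Theorem~\ref{resol} uses the multiplicity $\mult_p(Q)$ of the denominator at the worst indeterminacy point as the primary termination invariant (together with the count of poles achieving the maximal multiplicity). A single double oriented blowing-up does \emph{not} in general lower this multiplicity: in the paper's case analysis, situation~(iii.2) (all complex branches still tangent to the exceptional direction, with $k_i'\geq\ell_i$) leaves $\mult_0$ unchanged. The heart of the paper's argument is a Puiseux-series computation showing that one cannot remain in~(iii.2) indefinitely, because the branches come in complex conjugate pairs and their tangent lines must eventually separate. Your proposal sidesteps this analysis entirely by replacing the multiplicity with the abstract resolution complexity $c(g,p)$ (least length of a chain of point blowings-up, the first centered at $p$, resolving $g$ near the fibre over~$p$). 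Combined with the alignment trick of Lemma~\ref{align} and the observation that $\psi$ from Lemma~\ref{Lem-bup} is everywhere a local diffeomorphism, the lexicographic descent on $(N,m)$ then terminates. This is cleaner, but the weight shifts onto two points you currently gloss over and which are where the substance lives: (i)~for a pole $q$ of $g\circ\sigma_1$ on the exceptional fibre one has $c(g\circ\sigma_1,q)\leq c(g,p)-1$, which requires knowing that a minimal resolution tree over $p$ decomposes into disjoint commuting subtrees over the distinct poles on $E$; and (ii)~invariance of $c$ under Nash local diffeomorphisms, so that $\psi$ transports complexities faithfully. Both are true, but your argument also outsources the existence and finiteness of $c$ to Theorem~\ref{LocBoundBlowUp}, which the paper itself only sketches, whereas the paper's multiplicity argument is essentially self-contained. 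So what you gain in conceptual economy you pay for in reliance on an external resolution theorem and in unstated compatibility lemmas.

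Your closing paragraph is wrong about the paper's architecture: Theorem~\ref{SurjectiveArcLift} is deduced \emph{from} Theorem~\ref{resol} (plus Proposition~\ref{aa}), not the other way around. The arc lifting property is used only for the converse implication, to show that $f\circ\phi$ regular with $\phi$ arc-lifting forces $f$ to be locally bounded; it plays no role in constructing $\phi$, and invoking it as ``the argument the section actually develops'' to prove Theorem~\ref{resol} would be circular.
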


The strategy to prove Theorem \ref{resol} is to follow the same process as in classical resolution of indeterminacy \cite[\S5.3]{jp}, but replacing the usual blowing-up along a point by the double oriented blowing-up. The difficulty is to control the number of indeterminacy points, since the double oriented blowing-up is no longer an isomorphism outside the center and it is generically a double cover. A crucial step consists in separating all complex branches passing through an indeterminacy point with different tangents. Note however that, after applying the double oriented blowing-up at an indeterminacy point, the preimage of that point consists of several points each of these with less complex branches than the original indeterminacy point, but possibly with more different tangents!

To face this issue, we state first an auxiliary result that will be needed constantly in the proof of Theorem \ref{resol}.

\begin{lem}\label{align}
Let ${\mathfrak F}:=\{p_1,\ldots,p_n\}\subset\R^2$ be a finite set and let $\Ll'\subset\R^2$ be a line. Pick $i_0\in\{1,\ldots,n\}$ and $q\in\Ll'$. Then there exists a polynomial isomorphism $\varphi:\R^2\to\R^2$ that maps ${\mathfrak F}$ into a half-line in $\Ll'$ issued from $\varphi(p_{i_0})=q$. Moreover, consider
\begin{itemize}
\item an additional line $\Ll$ through $p_{i_0}$,
\item a collection $\{\Hh_1,\ldots,\Hh_r\}$ of complex lines through $p_{i_0}$ different from $\Ll$. 
\item a collection $\{\Hh_1',\ldots,\Hh_s'\}$ of complex lines through $q$ different from $\Ll'$,
\end{itemize}
Then we may assume in addition that the differential of $\varphi$ at $p_{i_0}$ maps $\Ll$ onto $\Ll'$ and the polynomial extension $\Phi:\C^2\to\C^2$ of $\varphi$ to $\C^2$ does not map any of the lines $\Hh_i$ through $p_{i_0}$ into the finite union $\bigcup_{j=1}^s\Hh'_j$ of lines through $q$.
\end{lem}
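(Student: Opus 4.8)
The plan is to construct $\varphi$ as a composition of elementary polynomial isomorphisms, handling the collinearity statement first and then checking that the extra transversality conditions can be arranged by a generic perturbation within a family of isomorphisms.

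\textbf{Step 1: placing the points on a line.} First I would reduce, after an affine isomorphism, to the case $p_{i_0}=q=0$ and $\Ll'=\{\y=0\}$. The classical trick to align a finite set of points on a line by a polynomial isomorphism is to use a shear of the form $(x,y)\mapsto(x, y+g(x))$, or more precisely to first apply a generic linear change so that the points $p_1,\dots,p_n$ have pairwise distinct first coordinates $a_1,\dots,a_n$ (with $a_{i_0}=0$), and then choose a polynomial $h\in\R[\x]$ with $h(a_k)=$ (second coordinate of $p_k$) for each $k$, which exists by Lagrange interpolation; the map $(x,y)\mapsto(x,y-h(x))$ then sends every $p_k$ onto $\Ll'=\{\y=0\}$ and fixes the first coordinates, so it sends $p_{i_0}$ to $0=q$. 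To push the image into a \emph{half-line} issued from $q$, I compose with $(x,y)\mapsto(x^2,y)$ if all the $a_k$ already have the same sign after a translation; more cleanly, first translate in the $\x$-direction so that all $a_k>0$ except we need $a_{i_0}=0$, so instead compose with $(x,y)\mapsto((x-c)^2,y)$ for a suitable constant — but since $p_{i_0}$ must go to $q$ we arrange the $a_k\ge 0$ with $a_{i_0}=0$ from the start by choosing the generic linear change appropriately, and then $(x,y)\mapsto(x^2,y)$ is not needed. The upshot is an explicit $\varphi_0$ with $\varphi_0(\mathfrak F)\subset[0,+\infty)\times\{0\}$ and $\varphi_0(p_{i_0})=q$.

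\textbf{Step 2: prescribing the differential on $\Ll$ and the transversality.} The map $\varphi_0$ built above has a specific differential at $p_{i_0}$, which is the linear part of the shear composed with the initial linear change; by precomposing with a linear isomorphism fixing $0$ I can adjust $d\varphi|_{p_{i_0}}$ to send the fixed line $\Ll$ to $\Ll'$ while preserving the property that the points land on $[0,+\infty)\times\{0\}$ — one checks that the freedom in Step 1 (the choice of the linear change realizing ``distinct, nonnegative first coordinates'') is a Zariski-dense open subset of $GL_2(\R)$, so inside it one can still impose the single linear condition $d\varphi(\Ll)=\Ll'$. It remains to ensure $\Phi$ does not map any $\Hh_i$ into $\bigcup_j\Hh'_j$. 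The differential $d\varphi|_{p_{i_0}}$ acts on the pencil of complex lines through $p_{i_0}$, and ``$\Phi(\Hh_i)\subset\bigcup\Hh'_j$'' is equivalent to $d\varphi|_{p_{i_0}}$ sending the tangent direction of $\Hh_i$ to one of finitely many directions; this is a finite union of proper algebraic conditions on the remaining parameters of the linear change, hence avoidable, \emph{provided} $\Hh_i\ne\Ll$ — which is exactly the hypothesis — so that $\Hh_i$'s direction is not already pinned down by the condition $d\varphi(\Ll)=\Ll'$.

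\textbf{Main obstacle.} The delicate point is the simultaneous bookkeeping in Step 2: we must impose $d\varphi(\Ll)=\Ll'$ (one constraint) together with the open conditions ``the image directions of $\Hh_1,\dots,\Hh_r$ avoid the $s$ directions of $\Hh'_1,\dots,\Hh'_s$'', and we must be sure these are compatible. Since $\Ll$ is distinct from every $\Hh_i$, fixing where $\Ll$ goes still leaves a one-parameter family of candidate differentials (the stabilizer of a line in $GL_2$ modulo scalars), and on this one-parameter family each forbidden coincidence $d\varphi(\Hh_i)=\Hh'_j$ is a single point, so finitely many points are excluded from a one-dimensional parameter space and a valid choice exists. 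One then has to verify that the global polynomial isomorphism assembled from these choices still maps $\mathfrak F$ into the desired half-line of $\Ll'$; this follows because the half-line property from Step 1 is preserved under the remaining linear freedom (it only rescales/reflects along $\Ll'$, which can be corrected by a final reflection $(x,y)\mapsto(-x,y)$ if orientation flips). I expect writing this compatibility check carefully, rather than any single computation, to be the real work.
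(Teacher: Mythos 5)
Your overall plan (reduce to $\Ll'=\{\y=0\}$, align the points via a shear $(x,y)\mapsto(x,y-h(x))$ built from interpolation, then worry about the tangent and transversality conditions) is in the same family as the paper's argument, but the mechanism you use in Step~2 is genuinely different from the one the paper uses, and it is where your proposal becomes shaky.

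The paper never uses linear precomposition to control $d\varphi|_{p_{i_0}}$ or the transversality. Instead, after sending all $p_i$ with $i\ne i_0$ onto the $x$-axis by a first shear, it picks a line $\Rr$ through $p_{i_0}$ whose intersection with the $x$-axis lies to the left of the remaining points, moves $\Rr$ to the $y$-axis by an affine change (this is what guarantees the \emph{half-line} statement, cleanly and without any need for a map like $(x,y)\mapsto(x^2,y)$, which you correctly sensed is not an isomorphism), and then applies a second shear $(x,y)\mapsto(x,y-Q(x))$ where $Q$ interpolates the points, has $Q(0)=\mu_{i_0}$, and has its graph tangent to $\Ll$ at $p_{i_0}$. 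The interpolation, the value at $0$ and the tangent at $p_{i_0}$ are finitely many linear conditions on the coefficients of $Q$, so there is an \emph{infinite-dimensional} residual freedom in $Q$ (add any multiple of a high-order polynomial vanishing to sufficient order at the $a_i$'s), and it is this freedom that the paper invokes to avoid $\Phi(\Hh_i)\subset\bigcup_j\Hh'_j$.

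Your proposal replaces this by precomposing with a linear map $L\in GL_2$. The problem is that the shear $h$ you build by Lagrange interpolation depends on $L$ (it interpolates the $L$-transformed points), so $d\varphi|_{p_{i_0}}=d(\text{shear}_L)|_0\circ L$ is a coupled function of $L$, and it is not a priori clear that the locus where $d\varphi(\Ll)=\Ll'$ is a nice one-parameter family inside the open set of admissible $L$'s (distinct, nonnegative first coordinates with $a_{i_0}=0$). You assert that "fixing where $\Ll$ goes still leaves a one-parameter family", but that statement would be true if $d\varphi$ depended only on $L$; here it does not. This coupling is exactly what the paper avoids by pushing the tangent condition onto $Q'(0)$ instead of onto $L$. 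Also, your phrase "is equivalent to $d\varphi|_{p_{i_0}}$ sending the tangent direction of $\Hh_i$ to one of finitely many directions" is not an equivalence — containment of the image curve in a union of lines implies the tangent-direction condition but not conversely — though since you only need the one-way implication to exclude parameters, the deduction still goes through; the imprecision is cosmetic rather than fatal, unlike the coupling issue above.

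In short: your Step~1 is essentially the paper's first shear (with a slightly muddled handling of the half-line that you eventually self-correct), but your Step~2 takes a different and less robust route. If you want to complete the argument along your lines, you should either decouple by fixing $L$ once and for all and then exploiting the residual polynomial freedom in the shear (add multiples of $\prod_i(x-a_i)$ to $h$ — this is exactly the paper's "much freedom to choose $Q$"), or give a careful argument that the parameter locus you work inside is nonempty after imposing all the constraints simultaneously.
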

\begin{proof} 
After an affine change of coordinates, we may assume that $\Ll'$ is the $x$-axis and the restriction to ${\mathfrak F}$ of the projection of $\R^2$ onto the $x$-axis is injective. Write $p_i:=(a_i,b_i)$ and let $P\in\R[\x]$ be an univariate polynomial such that $P(a_i)=b_i$ for $i\neq i_0$ and $P(a_{i_0})\neq b_{i_0}$. Then the polynomial isomorphism $(x,y)\mapsto (x,y-P(x))$ maps the points of ${\mathfrak F}$ except for $p_{i_0}$ into the $x$-axis. Let $\Rr\neq\Ll$ be a line through $p_{i_0}$ non-parallel to the $x$-axis such that the intersection $\Rr\cap\{y=0\}=\{(\lambda,0)\}$ leaves all the points $p_i$ for $i\neq i_0$ in the half-line $\{y\geq\lambda\}$. Consider an affine change of coordinates that keeps the $x$-axis invariant and maps the line $\Rr$ to the $y$-axis. After this change of coordinates $p_{i_0}=(0,\mu_{i_0})$ for some $\mu_{i_0}\neq0$ and $\Ll$ is not parallel to the $y$-axis. Let $Q\in\R[\x]$ be a polynomial such that its graph $\{y-Q(x)=0\}$ passes through the points $p_i$ and whose tangent at the point $p_{i_0}$ is $\Ll$. The polynomial isomorphism $\varphi:\R^2\to\R^2,\ (x,y)\mapsto (x,y-Q(x))$ maps the point $p_{i_0}$ to the origin, keeps the points $p_i$ for $i\neq i_0$ inside the $x$-axis, and the differential of the polynomial isomorphism $\varphi$ at $p_{i_0}$ maps $\Ll$ to the $x$-axis (that is, to the line $\Ll'$). After a translation parallel to the $x$-axis, we may assume in addition that $\varphi$ maps $p_{i_0}$ to $q$. 

As we have much freedom to choose $Q$, we may assume that the polynomial extension $\Phi:\C^2\to\C^2$ of $\varphi$ to $\C^2$ does not map any of the lines $\Hh_i$ through $p_{i_0}$ into the finite union $\bigcup_{j=1}^s\Hh'_j$ of lines through $q$, as required.
\end{proof}

\begin{proof}[Proof of Theorem \em\ref{resol}] 
The set of indeterminacy points of $f$ coincides with the zero-set $\{Q=0\}$, and this is a finite set by Lemma \ref{lem-bounded}. We may assume in addition that $Q$ is non-negative on $\R^2$. Consider the complex algebraic curve $\Cc:=\{z\in\C^2:\ Q(z)=0\}$, which is invariant under complex conjugation in $\C^2$. This means that if $p\in\Cc$ is a real point and $\Gamma$ is a complex branch of $\Cc$ at $p$, then the conjugated branch $\ol{\Gamma}$ is also a complex branch of $\Cc$ at $p$. If $\Tt_p$ is the tangent line to $\Gamma$ at $p$, then its complex conjugated $\ol{\Tt_p}$ is the tangent line to $\ol{\Gamma}$ at $p$. Thus, $\Tt_p=\ol{\Tt_p}$ if and only if $\Tt_p$ admits a linear equation with real coefficients. We are going to solve the indeterminacy points of $f$ by applying a finite chain of double oriented blowings-up centered at the indeterminacy points and polynomial isomorphisms of $\R^2$ like those provided by Lemma \ref{align} (that maps a finite subset of $\R^2$ inside a half-line). 

Denote ${\mathfrak F}:=\{p_1,\ldots,p_n\}$ the set of indeterminacy points of $f$. For each point $p\in{\mathfrak F}$ let $\mult_p(Q)$ be the multiplicity of $Q$ at $p$. Let $p_{i_0}\in{\mathfrak F}$ be such that $M:=\mult_{p_{i_0}}(\Cc)\geq\mult_p(\Cc)$ for each $p\in{\mathfrak F}$. By Lemma \ref{align} there exists a polynomial isomorphism $\varphi:\R^2\to\R^2$ such that if we substitute $f$ by $f\circ\varphi^{-1}$ we may assume that the indeterminacy points of $f$ belong to the negative half $y$-axis $\{(0,y):\ y\leq 0\}$ and $p_{i_0}$ is the origin. In case the germ $\Cc_{p_{i_0}}$ has only one tangent line at the origin, we know that it has to be a real line. If $\Cc$ has a real tangent line at the origin, we may assume in addition by Lemma \ref{align} that this tangent line is the $y$-axis whereas the remaining ones are different from $\x+{\tt i}\y=0$ and $\x-{\tt i}\y=0$. 

Note that the origin is now an indeterminacy point of $f$ with maximal multiplicity and the $y$-axis is one of its tangents. Consider the composition $f\circ\pi$, where $\pi$ denotes the double oriented blowing-up introduced in Definition \ref{obup}. Denote ${\mathfrak F}':=\{p_i:\ i\neq i_0\}$ and ${\mathfrak G}:=\pi^{-1}({\mathfrak F}')$ and let 
$$
\Pi:\C^2\setminus\{t^2+1=0\}\to\C^2
$$
be the natural rational extension of $\pi$ to $\C^2\setminus\{t^2+1=0\}$. As $\pi|_{\R^2\setminus\{\rho=0\}}$ is a local diffeomorphism and $\pi|_{\{t=0\}}:\{t=0\}\to\{x=0\}$ is a bijection between the $\rho$-axis and the $y$-axis, the sets ${\mathfrak G}$ and ${\mathfrak F}'$ have the same number of points and if $q\in{\mathfrak G}$, then $\Pi^{-1}(\Cc)$ has at $q$ the same number of complex branches and different tangents as $\Cc$ has at $\pi(q)$. 

Let us analyze now what happens at the origin. Let $\Gamma_1,\ldots,\Gamma_r$ be the complex branches of $\Cc$ at the origin. The germ $\Cc_0$ equals the union of branches $\bigcup_{i=1}^r\Gamma_i$ and 
$$
M:=\mult_0(Q)=\sum_{i=1}^re_i\mult(\Gamma_i) 
$$
for some positive integers $e_i$. Fix a complex branch $\Gamma_i$ and let $\Tt_i$ be the tangent line to $\Gamma_i$. We have $\Pi^{-1}(\Gamma_i)=\{\rho=0\}_{a_i^1}\cup\{\rho=0\}_{a_i^2}\cup\Lambda_i^1\cup\Lambda_i^2$ where $a_i^1,a_i^2\in\{\rho=0\}\subset\C^2\setminus\{t^2+1=0\}$ and $\Lambda_i^j$ is a complex branch at $a_i^j$ (as we will see in the following sometimes there are exactly two complex branches $\Lambda_i^1, \Lambda_i^2$ and sometimes there is only one and we will consider $\Lambda_i^1=\Lambda_i^2$). As $\pi|_{\{t=0\}}:\{t=0\}\to\{x=0\},\ (\rho,0)\to (0,-\rho)$ and $\pi|_{\R^2\setminus\{\rho t=0\}}:\R^2\setminus\{\rho t=0\}\to\R^2\setminus\{x=0\}$ is a double cover, one has exactly one point $a_i:=a_i^1=a_i^2$ if $\Tt_i$ is the $y$-axis and two different points $a_i^1,a_i^2$ otherwise. Let us analyze the germs $\Lambda_i^j$ for $j=1,2$. Choose a primitive parameterization $\gamma_i:=(\gamma_{i1},\gamma_{i2})$ of $\Gamma_i$. We may assume it has the form 
$$
\begin{cases}
\gamma_{i1}:=\s^{k_i},\\
\gamma_{i2}:=c_i\s^{\ell_i}+\cdots,
\end{cases}\ \text{if $k_i\leq\ell_i$}\quad\text{or}\ 
\begin{cases}
\gamma_{i1}:=c_i\s^{k_i}+\cdots,\\
\gamma_{i2}:=\s^{\ell_i},
\end{cases}\ \text{if $\ell_i<k_i$}
$$ 
with $k_i,\ell_i\geq1$ and $c_i\in\C\setminus\{0\}$. Recall that $\mult(\Gamma_i)=\min\{k_i,\ell_i\}$. Observe that the tangent line to $\Gamma_i$ at the origin is
$$
\Tt_i:=\begin{cases}
\{\y=0\}&\text{if $k_i<\ell_i$,}\\
\{c_i\x-\y=0\}&\text{if $k_i=\ell_i$,}\\
\{\x=0\}&\text{if $k_i>\ell_i$.}
\end{cases}
$$

If we apply the double oriented blowing-up $\pi$ at the origin, we make 
$$
x=\rho\frac{2t}{t^2+1},y=\rho\frac{t^2-1}{t^2+1},
$$
so $\rho^2=x^2+y^2$ and the order of $\rho$ for each complex branch $\Lambda_i^j$ with respect to the variable $\s$ is $\min\{k_i,\ell_i\}$ (recall that $1+c_i^2\neq0$ since the tangent line to $\Gamma_i$ is different from $\x+{\tt i}\y=0$ and $\x-{\tt i}\y=0$). We distinguish several situations:
\begin{itemize}
\item[(i)] If $k_i<\ell_i$, then $\rho=\pm\s^{k_i}+\cdots$ and $t=\pm1+\cdots$. Thus, $\Lambda_i^1$ and $\Lambda_i^2$ are two complex branches at the points $a_i^1:=(0,1)$ and $a_i^2:=(0,-1)$ of multiplicities smaller than or equal to $k_i=\mult(\Gamma_i)$.
\item[(ii)] If $k_i=\ell_i$, then $\rho=\pm\s^{k_i}\sqrt{(1+c_i^2)+\cdots}$. As $1+c_i^2\neq0$, one has $t=(\pm\sqrt{(1+c_i^2)}+c_i)+\cdots$. Thus, $\Lambda_i^1$ and $\Lambda_i^2$ are two complex branches at the points $a_i^1:=(0,\sqrt{(1+c_i^2)}+c_i)$ and $a_i^2:=(0,-\sqrt{(1+c_i^2)}+c_i)$ of multiplicities smaller than or equal to $k_i=\mult(\Gamma_i)$.
\item[(iii)] If $k_i>\ell_i$, then $\rho=-\s^{\ell_i}+\cdots$ and $t=c_i\frac{1}{2}\s^{k_i-\ell_i}+\cdots$. Thus, there is only one complex branch that we write $\Lambda_i:=\Lambda_i^1=\Lambda_i^2$ is a complex branch at the point $a_i:=a_i^1=a_i^2=(0,0)$ of multiplicity smaller than or equal to $\ell_i=\mult(\Gamma_i)$.
\end{itemize}

Starting from the locally bounded rational function $f=P/Q$, we have constructed the rational function $f\circ\pi=P'/Q'$ for some polynomials $P',Q'\in\R[{\tt r},\t]$. As $f\circ\pi$ remains locally bounded, $\{q\in\R^2:\ Q'(q)=0\}$ is a finite set. We have seen above that if two tangent lines $\Tt_i$ and $\Tt_{i'}$ are different, so are the points $a_i^j$ and $a_{i'}^{j'}$ for $j,j'\in\{1,2\}$. Thus,
$$
\mult_{a_i^j}(Q')=\sum_{i':\ \Tt_{i'}=\Tt_i}e_{i'}\mult(\Lambda_{i's}^j)\leq\sum_{i':\ \Tt_{i'}=\Tt_i}e_{i'}\mult(\Gamma_i).
$$
Consequently, if $\Cc$ has more than one tangent line at the origin, 
$$
\mult_{a_i^j}(Q')\leq\sum_{i':\ \Tt_{i'}=\Tt_i}e_{i'}\mult(\Gamma_i)<\sum_{i=1}^re_i\mult(\Gamma_i)=\mult_0(Q)=M
$$
for each point $a_i^j$ and although we have increased the cardinality of the zero set of the new denominator $Q'$, we have dropped the number of real points on which the denominator has multiplicity $M$. 

Assume next that $\Cc$ has only one tangent line at the origin (which is the $y$-axis). As in the usual desingularization process, the multiplicity will decrease but after a finite number of steps. Namely, the unique tangent to $\Cc$ at the origin is $\{\x=0\}$ and $k_i>\ell_i$ (case (iii) above) for $i=1,\ldots,r$. For each complex branch $\Lambda_i$ ($=\Lambda_i^1=\Lambda_i^2$) at the origin, we have the following two possibilities (after reseting the names of the variables and calling $\x$ the first variable and $\y$ the second variable):
\begin{itemize}
\item[(iii.1)] if $k_i':=k_i-\ell_i<\ell_i$, then $\mult(\Lambda_i)<\mult(\Gamma_i)$ and the tangent line to $\Lambda_i$ is $\{\y=0\}$.
\item[(iii.2)] if $k_i':=k_i-\ell_i\geq\ell_i$, then $\mult(\Lambda_i)=\mult(\Gamma_i)$ and we can parameterize the complex branch $\Lambda_i$ by a primitive parameterization $\lambda_i:=(\lambda_{i1},\lambda_{i2})\in\C\{\s\}^2$ such that $\lambda_{i1}:=d_i\s^{k_i'}+\cdots$ and $\lambda_{i2}:=\s^{\ell_i}$. In this case the tangent line is either $\{\x=0\}$ if $k_i'>\ell_i$ or $\{\y-d_i\x=0\}$ if $k_i'=\ell_i$.
\end{itemize}
If situation (iii.1) arises for one of the complex branches of $\Cc$ at the origin, we have $\mult_0(Q')<\mult_0(Q)=M$ and we have dropped the number of real points $p\in\{Q=0\}$ such that $\mult_p(Q)=M$. Otherwise, all the branches $\Lambda_i$ are in situation (iii.2) and $\mult_0(Q')=\mult_0(Q)=M$. The worst situation arises if $\Cc':=\{z\in\C^2:\ Q'(z)=0\}$ has only one tangent line the origin. We can apply to $Q'$ the previous procedure at the origin:
\begin{itemize}
\item We construct (using Lemma \ref{align}) a polynomial isomorphism that maps all the real zeros of $Q'$ into the half line $\{(0,y):\ y\leq 0\}$ and keeps the origin invariant. If $\Cc'$ has a real tangent line, we assume that one of such real tangent lines has equation $\x=0$.
\item We apply the double oriented blowing-up at the origin and repeat the previous discussion. 
\end{itemize}

As the parametrization of the involved complex branches are primitive and $\Lambda_i\cap\R^2=\{(0,0)\}$, if we follow the previous algorithm we realize that the only possibility to get (apparently) stuck in situation (iii.2) for all the complex branches (which is the only case that do not drop the multiplicity) is that $\ell_i=1$ for each $i=1,\ldots,r$. Assume that such is the case and choose a parameterization $\gamma_i:=(\gamma_{i1},\gamma_{i2})$ of the complex branch $\Gamma_i$ (at the origin), that is,
$$
\begin{cases}
\gamma_{i1}:=c_2\s^2+\cdots+c_m\s^m+c_{i,m+1}^*\s^{m+1}+\cdots,\\
\gamma_{i2}:=\s.
\end{cases}
$$
where $c_2,\ldots,c_m\in\R$ (these coefficients are the same for each $i$ because we are `apparently' stuck in situation (iii.2)), $c_{i,m+1}\in\C$ for $i=1,\ldots,r$ and for instance $c_{1,m+1}^*\in\C\setminus\R$ (because the branches of $\Cc$ are all complex as $\Cc\cap\R^2$ is a finite set). Thus, if we apply our algorithm $m+1$ times at the origin, we achieve from the complex branch $\Gamma_1$ a complex branch $\Theta_1$ that admits a primitive parameterization $\theta_1:=(\theta_{11},\theta_{12})\in\C\{\s\}^2$ such that 
$$
\begin{cases}
\theta_{11}:=c_{1,m+1}^*\s+\cdots,\\
\theta_{12}:=\s.
\end{cases}
$$
The tangent line to $\Theta_1$ is $\{\x-c_{1,m+1}^*\y=0\}$ whereas the tangent line to its conjugated complex branch $\ol{\Theta}_1$ is $\{\x-\ol{c}_{1,m+1}^*\y=0\}$, which is different from $\{\x-c_{1,m+1}^*\y=0\}$. Hence, it is not possible to get stuck in situation (iii.2) indefinitely. 

Thus, we are always able to reduce the number of points of multiplicity $M$ with respect to the denominator, using finitely many (polynomial isomorphisms and) double oriented blowings-up. Consequently, after applying suitably the algorithm above finitely many times we find a regular map $\phi:\R^2\to\R^2$ which is a finite composition of (polynomial isomorphisms and) double oriented blowings-up such that $f\circ\phi$ is a locally bounded rational function whose denominator has empty zero-set, that is, $f\circ\phi$ is a regular function, as required.
\end{proof}

\subsection{Arc-lifting property}
As we have seen above any locally bounded rational function on the plane becomes regular after a composition with a generically finite surjective regular map, which can be chosen as a composition of a finite sequence of double oriented blowings-up and polynomial isomorphisms. We have seen also in Example \ref{ex} that other rational functions may becomes regular after composing with a surjective regular map. We propose here to add a geometric condition to the involved surjective regular maps in order to characterize this property.

\begin{define} 
A map $\phi:\R^2\to\R^2$ satisfies the \em arc lifting property \em if for any analytic arc $\gamma:(\R,0)\to (\R^2,p)$, where $p\in\R^2$, there exists an analytic arc $\tilde{\gamma}:(\R,0)\to (\R^2,\tilde{p})$, where $\tilde{p}\in\R^2$, such that $\phi\circ\tilde{\gamma}=\gamma$.
\end{define}

Arc lifting property is a natural condition when dealing with blowing-analytic equivalence \cite{fp}. In particular, a blowing-up along a non-singular center satisfies the arc lifting property, but also a blowing-up along an ideal, or even a real modification \cite[p. 99]{fp}. 

\begin{prop}\label{aa} 
A double oriented blowing-up satisfies the arc lifting property.
\end{prop}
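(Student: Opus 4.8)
The plan is to verify the arc lifting property directly for the double oriented blowing-up $\pi:\R^2\to\R^2$, $(\rho,t)\mapsto(\rho\frac{2t}{t^2+1},\rho\frac{t^2-1}{t^2+1})$ at the origin; the case of an arbitrary center follows since $\pi$ conjugated by translations is still a double oriented blowing-up and translations obviously lift arcs. So let $\gamma=(\gamma_1,\gamma_2):(\R,0)\to(\R^2,p)$ be an analytic arc. If $p\neq(0,0)$, then since $\pi$ restricts to a local diffeomorphism away from $\{\rho=0\}$ (and $\pi$ is surjective), one lifts $\gamma$ near $0$ through the local inverse, so the content is entirely at $p=(0,0)$, i.e.\ the case $\gamma(0)=(0,0)$.

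So assume $\gamma(0)=(0,0)$, with $\gamma$ not identically zero (the constant arc lifts trivially to the constant arc at $(0,0)$, since $\pi(0,0)=(0,0)$). I would write $\gamma_1(s)=s^a u_1(s)$, $\gamma_2(s)=s^b u_2(s)$ with $u_1(0),u_2(0)$ not both zero, and set $n:=\min(a,b)$. The idea is that $\gamma_1^2+\gamma_2^2 = s^{2n}w(s)$ with $w$ analytic and $w(0)>0$ (here one uses that $\gamma$ is \emph{real}: a sum of two real squares cannot vanish to higher order than twice the common order unless both vanish, and its leading coefficient is strictly positive). Hence $\sqrt{\gamma_1^2+\gamma_2^2}=s^n v(s)$ with $v$ analytic near $0$ and $v(0)>0$; set $\rho(s):=\pm s^n v(s)$ (the sign to be fixed later). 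Since $\rho(s)^2=\gamma_1(s)^2+\gamma_2(s)^2$, the point $(\gamma_1(s),\gamma_2(s))/\rho(s)$ lies on the unit circle for $s\neq 0$, extends analytically through $s=0$ to a point on $\sph^1$, and the only obstruction to writing it via the rational parametrization $v\mapsto(\frac{2t}{t^2+1},\frac{t^2-1}{t^2+1})$ is if that limiting point is the North pole $(0,1)$, i.e.\ if $\gamma_1/\rho\to 0$ and $\gamma_2/\rho\to 1$. Choosing the sign of $\rho$ appropriately (replacing $\rho$ by $-\rho$ swaps the limit point to the antipode, since $\pi(-\rho,t)$ corresponds to the antipodal point of the circle via the same $t$), we may assume the limiting point is \emph{not} the North pole; then $t(s):=\gamma_1(s)/(\rho(s)+\gamma_2(s))$ is analytic near $0$ (the denominator $\rho(s)+\gamma_2(s)$ has strictly positive value at $s=0$ after the sign choice, hence is a unit), and a direct check using $\rho^2=\gamma_1^2+\gamma_2^2$ shows $\pi(\rho(s),t(s))=(\gamma_1(s),\gamma_2(s))=\gamma(s)$. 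Thus $\tilde\gamma:=(\rho,t):(\R,0)\to(\R^2,(\rho(0),t(0)))$ is the required analytic lift.

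The main obstacle, and the only genuinely delicate point, is the extraction of the analytic square root $\sqrt{\gamma_1^2+\gamma_2^2}=s^n v(s)$ with $v(0)>0$: this is exactly where the hypothesis that the arc is real-analytic into $\R^2$ (not $\C^2$) is used — over $\C$ the quantity $\gamma_1^2+\gamma_2^2$ could vanish to odd order (e.g.\ $\gamma_1=\mathtt{i}\gamma_2$), which is precisely the locus $\{x^2+y^2=0\}$ excluded in the double-cover statement of the preceding lemma, and no analytic square root would exist. Once $v(0)>0$ is secured, everything else is elementary manipulation of units in $\R\{s\}$ together with the sign bookkeeping needed to avoid the North pole; I would present the verification $\pi(\rho(s),t(s))=\gamma(s)$ by substituting $t=\gamma_1/(\rho+\gamma_2)$, using $t^2+1=\frac{\gamma_1^2+(\rho+\gamma_2)^2}{(\rho+\gamma_2)^2}=\frac{2\rho(\rho+\gamma_2)}{(\rho+\gamma_2)^2}=\frac{2\rho}{\rho+\gamma_2}$, so that $\rho\frac{2t}{t^2+1}=\rho\cdot\frac{2\gamma_1/(\rho+\gamma_2)}{2\rho/(\rho+\gamma_2)}=\gamma_1$ and $\rho\frac{t^2-1}{t^2+1}=\rho\cdot\frac{\gamma_1^2-(\rho+\gamma_2)^2}{2\rho(\rho+\gamma_2)}=\frac{\gamma_1^2-(\rho+\gamma_2)^2}{2(\rho+\gamma_2)}$, which simplifies using $\gamma_1^2=\rho^2-\gamma_2^2=(\rho-\gamma_2)(\rho+\gamma_2)$ to $\frac{(\rho-\gamma_2)(\rho+\gamma_2)-(\rho+\gamma_2)^2}{2(\rho+\gamma_2)}=\frac{-2\gamma_2(\rho+\gamma_2)}{2(\rho+\gamma_2)}=-\gamma_2$; absorbing the sign into the final sign choice of $\rho$ completes the argument.
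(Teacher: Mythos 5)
Your overall strategy --- extract $\rho$ as the analytic square root of $\gamma_1^2+\gamma_2^2$, then obtain $t$ \emph{rationally} by inverting the circle parametrization --- is sound and in fact tidier than the paper's proof, which sets $t=\delta\sqrt{(\rho+\gamma_2)/(\rho-\gamma_2)}$ and then runs a three-way case analysis on $k<\ell$, $k=\ell$, $k>\ell$. Your rational expression for $t$ avoids a second square-root extraction and unifies those cases. However, as written the argument contains a sign error that is not cosmetic: your own verification produces a false identity, and the attempted repair at the end does not work.

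The parametrization $t\mapsto\bigl(\tfrac{2t}{t^2+1},\tfrac{t^2-1}{t^2+1}\bigr)$ is the inverse stereographic projection from the North pole $(0,1)$, whose forward map is $t=x/(1-y)$; applied to $(\gamma_1/\rho,\gamma_2/\rho)$ this gives $t=\gamma_1/(\rho-\gamma_2)$, \emph{not} $\gamma_1/(\rho+\gamma_2)$. Your computation confirms this: with $t=\gamma_1/(\rho+\gamma_2)$ you correctly find $\pi(\rho,t)=(\gamma_1,-\gamma_2)$. The closing claim, ``absorbing the sign into the final sign choice of $\rho$ completes the argument,'' is wrong: replacing $\rho$ by $-\rho$ while keeping the formula $t=\gamma_1/(\rho+\gamma_2)$ sends $(\rho,t)$ to $(-\rho,-1/t)$, and the double-cover identity $\pi(-\rho,-1/t)=\pi(\rho,t)$ means this gives the \emph{same} image $(\gamma_1,-\gamma_2)$. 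No choice of sign for $\rho$ fixes the second coordinate; the sign must be corrected in the formula for $t$ itself. Once you set $t:=\gamma_1/(\rho-\gamma_2)$, your final display goes through with all signs right, using $\gamma_1^2=(\rho-\gamma_2)(\rho+\gamma_2)$.

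A second, smaller inaccuracy: the parenthetical ``the denominator $\rho(s)+\gamma_2(s)$ has strictly positive value at $s=0$\ldots hence is a unit'' is false when $\gamma(0)=(0,0)$, since then $\rho(0)=\gamma_2(0)=0$ and the denominator vanishes at the origin. What is actually required is that the order of $\rho-\gamma_2$ at $s=0$ does not exceed the order of $\gamma_1$; after $\rho^2=\gamma_1^2+\gamma_2^2$ this amounts to requiring $\rho-\gamma_2$ to have the same order $n:=\min(\mathrm{ord}\,\gamma_1,\mathrm{ord}\,\gamma_2)$ as $\rho$. That is precisely what ``the limit of $(\gamma_1/\rho,\gamma_2/\rho)$ is not $(0,1)$'' guarantees, and since switching the sign of $\rho$ moves this limit to the antipode, at least one of the two signs avoids the North pole. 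So your instinct is correct --- you only need to rephrase the unit claim as an order comparison. With these two repairs, the proof is a genuine simplification of the paper's case-by-case argument.
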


It is possible to use Lemma \ref{Lem-bup} together with the arc lifting property of the blowing-up along a point to prove the arc lifting property of a double oriented blowing-up. However we prefer to produce a direct elementary proof of this fact in order to enlighten the special behavior of a double oriented blowing-up, namely to describe in full details when an arc admits one or two liftings.

\begin{proof}[Proof of Proposition \em \ref{aa}] 
Let $\gamma:=(\gamma_1,\gamma_2):(\R,0)\to (\R^2,p)$ be an analytic arc in $\R^2$ defined on a neighborhood of the origin in $\R$. If we consider the double oriented blowing-up at the origin of $\R^2$, the statement is obvious if $p$ is not the origin. Assume in the following $p:=(0,0)$ and consider $\gamma_1,\gamma_2$ as elements of the ring $\R\{\s\}$ of analytic series in one variable with coefficients in $\R$. By definition of the double oriented blowing-up at the origin we must find analytic series $\rho,t\in\R\{\s\}$ such that 
\begin{equation}\label{dobu}
\Big(\rho\frac{2t}{t^2+1},\rho\frac{t^2-1}{t^2+1}\Big)=(\gamma_1,\gamma_2),
\end{equation}
hence the tuple $\tilde{\gamma}:=(\rho,t)$ satisfies the required conditions. If $\gamma_1=0$, we take $t:=0$ and $\rho:=-\gamma_2$. If $\gamma_2=0$, we take $t:=1$ and $\rho:=\gamma_1$. Thus, we assume in the following $\gamma_1,\gamma_2\neq0$ and write
$$
\gamma_1:=a\s^k+\cdots\quad\text{and}\quad\gamma_2:=b\s^\ell+\cdots
$$
where $a,b\in\R\setminus\{0\}$ and $\ell,k$ are positive integers. Using equation \eqref{dobu} we deduce 
\begin{equation}\label{dobu1}
\rho=\veps\sqrt{\gamma_1^2+\gamma_2^2}\quad\text{and}\quad t=\delta\sqrt{\frac{\rho+\gamma_2}{\rho-\gamma_2}}
\end{equation}
for some $\veps,\delta\in\{-1,+1\}$, as soon as the previous expressions have sense and provide analytic series. We analyze the following three situations:

\par$\bullet$ Assume first that $k<\ell$. Then $\rho^2=\gamma_1^2+\gamma_2^2=a^2\s^{2k}(1+\cdots)$ so that the two possible analytic choices for $\rho$ are given by
$$
\rho:=\veps a\s^k\sqrt{1+\cdots}=\veps a\s^k(1+\cdots),
$$
where $\veps=\pm 1$. Analogously
$$
t^2=\frac{\rho+\gamma_2}{\rho-\gamma_2}=\frac{(\veps a\s^k+\cdots)+(b\s^\ell+\cdots)}{(\veps a\s^k+\cdots)-(b\s^\ell+\cdots)}=1+\cdots.
$$
Therefore we obtain two possible analytic solutions $t:=\delta\sqrt{1+\cdots}=\delta(1+\cdots)$ where $\delta=\pm 1$. The analytic arc $\tilde \gamma=(\rho,t)$ is a lifting of $\gamma$ if equation \eqref{dobu} is satisfied, which is the case if and only if $\veps\delta =+1$. Thus, we have obtained two analytic liftings for $\gamma$.

\par$\bullet$ Assume next $k=\ell$. Proceeding as in the previous case $\rho:=\veps\sqrt{a^2+b^2}\s^k+\cdots$ (with $\veps=\pm1$) whereas 
$$
t^2=\frac{\veps\sqrt{a^2+b^2}+b+\cdots}{\veps\sqrt{a^2+b^2}-b+\cdots}=c_\veps+\cdots
$$
and $c_\veps:=\frac{\sqrt{a^2+b^2}+\veps b}{\sqrt{a^2+b^2}-\veps b}>0$ for both choices of $\veps=\pm 1$, so $t:=\delta\sqrt c_\veps+\cdots$ for some $\delta=\pm1$. Using again equation \eqref{dobu} there exists two analytic liftings of $\gamma$ corresponding to the choice $\veps\delta={\rm sign}(a)$. 

\par$\bullet$ The situation in the remaining case $k>\ell$ is slightly different. The analytic series
$$
\rho:=\veps\sqrt{\gamma_1^2+\gamma_2^2}=\veps|b|\s^\ell+\cdots
$$
for some $\veps=\pm 1$ whereas $t$ must satisfy the equation
\begin{equation}\label{eq-aa}
t^2=\frac{\rho+\gamma_2}{\rho-\gamma_2}=\frac{(\veps|b|+\cdots)+(b+\cdots)}{(\veps|b|+\cdots)-(b+\cdots)}.
\end{equation}
The previous equation has an analytic solution $t$ if and only if $\veps=-{\rm sign}(b)$, that is, $\rho:=-b\s^\ell+\cdots$. We rewrite equation \eqref{eq-aa} as
$$
t^2=\frac{\rho+\gamma_2}{\rho-\gamma_2}=\frac{-{\rm sign}(b)\sqrt{\gamma_1^2+\gamma_2^2}+{\rm sign}(b)\sqrt{\gamma_2^2}}{-{\rm sign}(b)\sqrt{\gamma_1^2+\gamma_2^2}-{\rm sign}(b)\sqrt{\gamma_2^2}}=\frac{\sqrt{1+(\gamma_1/\gamma_2)^2}-1}{\sqrt{1+(\gamma_1/\gamma_2)^2}+1}=\frac{a^2}{4b^2}\s^{2(k-\ell)}+\cdots.
$$
Thus, there exist two possible analytic solutions to \eqref{eq-aa} given by the formula
$$
t:=\delta\frac{a}{2b}\s^{k-\ell}+\cdots
$$
where $\delta=\pm 1$. The couple $(\rho,t)$ provides analytic lifting of $\gamma$ if equation \eqref{dobu} holds and this happens if and only if $\delta=-1$, that is, $t:=-\frac{a}{2b}\s^{k-\ell}+\cdots$. Consequently, in this case we have only one analytic lifting of $\gamma$. 

After the analysis made we conclude that the double oriented blowing-up at the origin satisfies the arc lifting property, as required. 
\end{proof}

The last result allows us to establish that the locally bounded rational functions are exactly those rational functions on $\R^2$ that become regular after composition with a surjective regular map satisfying the arc lifting property (compare this with Theorem \ref{LocBoundBlowUp}).

\begin{thm}\label{SurjectiveArcLift} 
Let $f$ be a rational function on $\R^2$. There exists a surjective regular map $\phi:\R^2 \to\R^2$ satisfying the arc lifting property and such that $ f\circ\phi$ is regular if and only if $f$ is locally bounded.
\end{thm}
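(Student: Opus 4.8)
The two implications are of quite different nature, and the plan is to treat them separately.

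For the implication ``$f$ locally bounded $\Rightarrow$ a suitable $\phi$ exists'' the idea is to simply read off the map from Theorem \ref{resol}, which already produces a finite composition $\phi\colon\R^2\to\R^2$ of double oriented blowings-up and polynomial isomorphisms such that $f\circ\phi$ is regular. So the only thing left to verify is that this $\phi$ is a surjective regular map enjoying the arc lifting property. A polynomial isomorphism is a surjective regular map that lifts arcs trivially (compose the given arc with the inverse isomorphism), and a double oriented blowing-up is a surjective regular map by Definition \ref{obup} and the lemma following it and satisfies the arc lifting property by Proposition \ref{aa}. Finally, both ``surjective regular map'' and ``arc lifting property'' are stable under composition: to lift an arc through $\phi=\phi_k\circ\cdots\circ\phi_1$ one lifts it first through $\phi_k$, then through $\phi_{k-1}$, and so on. This disposes of this direction with essentially no new work.

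For the converse the plan is an arc-transport argument. Assume $\phi$ as in the statement exists and suppose, for contradiction, that $f=P/Q$ (with $P,Q\in\R[\x,\y]$ coprime) is not locally bounded, say not locally bounded at a point $p\in\R^2$. The first step is to manufacture an analytic arc through $p$ along which $f$ blows up. Introducing the bounded rational function $\varphi_1:=\tfrac{Q^2}{P^2+Q^2}=\tfrac1{1+f^2}$, which is defined on $\{Q\neq0\}$ and takes values in $[0,1]$, non-local-boundedness of $f$ at $p$ means exactly that $\varphi_1$ has infimum $0$ on every neighbourhood of $p$ intersected with $\{Q\neq0\}$. Hence the semialgebraic set
$$
W:=\{(x,t)\in\R^2\times\R:\ t>0,\ Q(x)\neq0,\ Q(x)^2<t^2(P(x)^2+Q(x)^2),\ \|x-p\|^2<t^2\}
$$
has $(p,0)$ in its closure, and the curve selection lemma \cite[Prop.~8.1.13]{bcr} supplies an analytic arc $s\mapsto(\gamma(s),\tau(s))$ with $(\gamma(0),\tau(0))=(p,0)$ and $(\gamma(s),\tau(s))\in W$ for $s\in(0,\veps)$. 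Since membership in $W$ forces $Q(\gamma(s))\neq0$ for $s\in(0,\veps)$, the composite $f\circ\gamma$ is a genuine analytic function on $(0,\veps)$, and the inequality $\varphi_1(\gamma(s))<\tau(s)^2\to0$ forces $|f(\gamma(s))|\to+\infty$ as $s\to0^+$.

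The second step lifts $\gamma$: by the arc lifting property of $\phi$ there is an analytic arc $\tilde\gamma\colon(\R,0)\to(\R^2,\tilde p)$ with $\phi\circ\tilde\gamma=\gamma$. On one hand $f\circ\phi$ is a regular function on $\R^2$, hence continuous, so $(f\circ\phi)(\tilde\gamma(s))$ tends to the finite value $(f\circ\phi)(\tilde p)$ and stays bounded near $s=0$. On the other hand, for $s\in(0,\veps)$ we have $\phi(\tilde\gamma(s))=\gamma(s)\notin\{Q=0\}$, so the rational function $f\circ\phi$ agrees there with $f(\phi(\tilde\gamma(s)))=f(\gamma(s))$, which is unbounded as $s\to0^+$. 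This contradiction shows that $f$ must be locally bounded, completing the argument. The only delicate point that I expect to require care is the design of the semialgebraic set $W$ in the converse: it must simultaneously place $(p,0)$ in its closure and force the selected arc $\gamma$ to avoid the pole locus $\{Q=0\}$, so that $f\circ\gamma$ is actually defined and the identification $(f\circ\phi)(\tilde\gamma(s))=f(\gamma(s))$ is legitimate; everything else is either a direct appeal to Theorem \ref{resol} and Proposition \ref{aa} or a routine use of the curve selection lemma.
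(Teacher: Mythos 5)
Your argument is correct and follows the same route as the paper: the forward direction is dispatched by combining Theorem~\ref{resol} with Proposition~\ref{aa} (plus stability of surjectivity and arc lifting under composition), and the converse is proved by lifting an analytic arc along which $f$ blows up and contradicting the continuity of the regular function $f\circ\phi$. The only difference is that you spell out, via the curve selection lemma applied to a carefully designed semialgebraic set $W$, the existence of the unbounded arc that the paper takes for granted.
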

\begin{proof}
By Theorem \ref{resol} and Proposition \ref{aa}, it is enough to prove that if $\phi:\R^2\to\R^2$ is a surjective regular map satisfying the arc lifting property and $f\circ\phi$ is a regular function, then $f$ is locally bounded. Otherwise, there exists an analytic arc $\gamma:(\R,0)\to(\R^2,p)$ such that $(f\circ\gamma)(s)$ goes to infinity as $s$ tends to zero. By the arc lifting property there exists an analytic arc $\tilde \gamma:(\R,0)\to (\R^2,\tilde{p})$ such that $\phi\circ\tilde{\gamma}=\gamma$. In particular the analytic arc $f\circ\gamma=f\circ\phi\circ\tilde{\gamma}$ is not bounded at the origin, which is a contradiction.
\end{proof}

\appendix
\section{The open quadrant}\label{s4}

As announced in the Introduction we represent the open quadrant $\Qq:=\{\x>0,\y>0\}$ as the image of simple regulous and regular maps $f,g:\R^2\to\R^2$. 

\renewcommand\thesubsection{\thesection.\arabic{subsection}}
\subsection{A regulous map whose image is the open quadrant}
Consider the regulous map
$$
f:=(f_1,f_2):\R^2\to\R^2,\ (x,y)\mapsto\Big(x^2\Big(\frac{x^2y^2}{x^2+y^2}\Big)+\frac{
(xy+1)^2}{1+(x+y)^2},y^2\Big(\frac{x^2y^2}{x^2+y^2}\Big)+\frac{(xy+1)^2}{1+(x+y)^2}\Big).
$$
We claim: $f(\R^2)=\Qq$.

\begin{proof}
Observe first that $f_1,f_2$ are strictly positive on $\R^2$. Thus, $f(\R^2)\subset\Qq$. Let us prove next the converse inclusion. Pick a point $(a,b)\in\Qq$. Let us show first: \em If $a=b$, then $(a,a)\in g(\R^2)$\em. 

We have
$$
f(t,t)=\Big(\frac{t^4}{2}+\frac{(t^2+1)^2}{1+4t^2},\frac{t^4}{2}+\frac{(t^2+1)^2}{1+4t^2}\Big)
$$
and the image of the previous map contains the half-line $\{(s,s):\ s\geq1\}$. In addition,
$$
f(t,0)=\Big(\frac{1}{t^2+1},\frac{1}{t^2+1}\Big)
$$
and its image contains the segment $\{(s,s):\ 0<s\leq1\}$. Thus, $(a,a)\in f(\R^2)$ for each $a>0$.

If $a\neq b$, we may assume $a<b$. We search $x,y\in\R$ such that $f(x,y)=(a,b)$. Write $y=\lambda x$ where $\lambda\in\R$. We obtain:
\begin{align*}
x^2\Big(\frac{x^2y^2}{x^2+y^2}\Big)+\frac{
(xy+1)^2}{1+(x+y)^2}&=\frac{x^4\lambda^2}{1+\lambda^2}+\frac{
(x^2\lambda+1)^2}{1+x^2(1+\lambda)^2}=a,\\
y^2\Big(\frac{x^2y^2}{x^2+y^2}\Big)+\frac{
(xy+1)^2}{1+(x+y)^2}&=\frac{x^4\lambda^4}{1+\lambda^2}+\frac{
(x^2\lambda+1)^2}{1+x^2(1+\lambda)^2}=b.
\end{align*}
Consequently,
$$
b-a=\frac{x^4\lambda^2(\lambda^2-1)}{1+\lambda^2}\quad\leadsto\quad x=\sqrt[4]{\frac{(1+\lambda^2)(b-a)}{\lambda^2(\lambda^2-1)}}
$$
for some $\lambda>1$ (recall that $a<b$). We deduce
$$
\varphi(\lambda):=\frac{(1+\lambda^2)(b-a)}{\lambda^2(\lambda^2-1)}\frac{\lambda^2}{1+\lambda^2}+\frac{\Big(\sqrt{\frac{(1+\lambda^2)(b-a)}{\lambda^2(\lambda^2-1)}}\lambda+1\Big)^2}{1+(1+\lambda)^2\sqrt{\frac{(1+\lambda^2)(b-a)}{\lambda^2(\lambda^2-1)}}}=a
$$
for some $\lambda>1$. Simplifying we conclude
$$
\varphi(\lambda)=\frac{(b-a)}{\lambda^2-1}+\frac{\Big(\pm\sqrt{\frac{(1+\lambda^2)(b-a)}{(\lambda^2-1)}}+1\Big)^2}{1+\frac{(1+\lambda)^2}{|\lambda|}\sqrt{\frac{(1+\lambda^2)(b-a)}{(\lambda^2-1)}}}
$$
Observe that $\lim_{\lambda\to+\infty}\varphi(\lambda)=0$ while $\lim_{\lambda\to1^+}\varphi(\lambda)=+\infty$. Thence, there exists $\lambda_0>1$ such that $\varphi(\lambda_0)=a$. If we take
$$
x_0:=\sqrt[4]{\frac{(1+\lambda_0^2)(b-a)}{\lambda_0^2(\lambda_0^2-1)}}\quad\text{and}\quad y_0:=\lambda_0x_0,
$$
we have $f(x_0,y_0)=(a,b)$, as required.
\end{proof}
\begin{remark}\label{oq}
Observe that the set of indeterminacy of $f$ is $\{(0,0)\}$ and $f(0,0)=(1,1)$. In addition, $f(\frac{1}{4}\sqrt{-6+2\sqrt{73}},\frac{1}{4}\sqrt{-6+2\sqrt{73}})=(1,1)$. Thus, if $h:\R^2\to\R^2$ is a polynomial map such that $h(\R^2)=\R^2\setminus\{(0,0)\}$ (see Lemma \ref{complement} and \cite[Ex.1.4(iii)]{fg1}), then $g:=f\circ h:\R^2\to\R^2$ is a regular map such that $g(\R^2)=\Qq$. The polynomial map $h:\R^2\to\R^2$ proposed in \cite[Ex.1.4(iii)]{fg1} that has the punctured plane $\R^2\setminus\{(0,0)\}$ as image is given by the formula $(x,y)\mapsto(xy-1,(xy-1)x^2-y)$.
\end{remark}

\subsection{A regular map whose image is the open quadrant}
A simpler regular map $g:\R^2\to\R^2$ than the one proposed in Remark \ref{oq} that has $\Qq$ as its image is provided next. Consider the regular map
$$
g:=(g_1,g_2):\R^2\to\R^2,\ (x,y)\mapsto\Big(x^2(xy-1)^2+\frac{(xy+1)^2}{1+(x+y)^2},y^2(xy-1)^2+\frac{(xy+1)^2}{1+(x+y)^2}\Big).
$$
We claim: $g(\R^2)=\Qq$.

\begin{proof} 
We proceed analogously to the preceding case, but replacing the lines $y=\lambda x$ by hyperbolas $y=\lambda/x$. As $g_1,g_2$ are strictly positive on $\R^2$, we have $g(\R^2)\subset\Qq$. To prove the converse inclusion pick a point $(a,b)\in\Qq$. Let us show first: \em If $a=b$, then $(a,a)\in g(\R^2)$\em. 

We have
$$
g(t,t)=\Big(t^2(t^2-1)^2+\frac{(t^2+1)^2}{1+4t^2},t^2(t^2-1)^2+\frac{(t^2+1)^2}{1+4t^2}\Big).
$$
The image of $g(t,t)$ contains the half-line $\{(s,s):\ s\geq \frac{4}{5}\}$ (for $t=1$ we have $g(1,1)=(\frac{4}{5},\frac{4}{5})$). In addition,
$$
g(t,1/t)=\Big(\frac{4}{1+(t+1/t)^2},\frac{4}{1+(t+1/t)^2}\Big)=\Big(\frac{4t^2}{t^2+(t^2+1)^2},\frac{4t^2}{t^2+(t^2+1)^2}\Big)
$$
and its image contains the segment $\{(s,s):\ 0<s\leq \frac{4}{5}\}$ (for $t=1$ we have $g(1,1)=(\frac{4}{5},\frac{4}{5})$). Consequently, $(a,a)\in g(\R^2)$.

If $a\neq b$ we may assume $a>b$. We search $x,y\in\R$ such that $g(x,y)=(a,b)$. Write $y=\lambda /x$ for some $\lambda\in\R$. Then
$$
a-b=g_1(x,\lambda/x)-g_2(x,\lambda /x)=(\lambda-1)^2(x^2-\lambda^2/x^2)
$$
Consequently, for $\lambda \neq 1$
$$
x^4-\frac{a-b}{(\lambda-1)^2}x^2-\lambda^2=0 \quad\leadsto\quad x^2=r(\lambda):=\Big(\frac{a-b}{(\lambda-1)^2}+\sqrt{\frac{(a-b)^2}{(\lambda-1)^4}+4\lambda^2}\Big)\Big/2.
$$
We have $\lim_{\lambda\to+\infty}r(\lambda)=+\infty$ and $\lim_{\lambda\to1^+} (\lambda-1)^2r(\lambda)=a-b$.

Consider the equation $f_1(x,\lambda/x)=a$, that is,
$$
(\lambda-1)^2x^2+\frac{(\lambda+1)^2x^2}{x^2+(x^2+\lambda)^2}=a,
$$
and replace $x^2$ by $r(\lambda)$ to obtain
$$
\varphi (\lambda):=(\lambda-1)^2r(\lambda)+\frac{(\lambda+1)^2r(\lambda)}{r(\lambda)+(r(\lambda)+\lambda)^2}=a.
$$
Observe that $\lim_{\lambda\to+\infty}\varphi(\lambda)=+\infty$ while $\lim_{\lambda\to1^+}\varphi(\lambda)=a-b$. As $a>a-b$, there exists $\lambda_0>1$ such that $\varphi(\lambda_0)=a$. If we take
$$
x_0:=\sqrt{\Big(\frac{a-b}{(\lambda_0-1)^2}+\sqrt{\frac{(a-b)^2}{(\lambda_0-1)^4}+4\lambda_0^2}\Big)\Big/2}\quad\text{and}\quad y_0:=\lambda_0/x_0,
$$
we have $g(x_0,y_0)=(a,b)$, as required.
\end{proof}

\bibliographystyle{amsalpha}

\end{document}